\documentclass[11pt,a4paper]{amsart}\usepackage{fullpage}
\usepackage{amssymb,amscd}

\newtheorem{thm}{Theorem}[section]
\newtheorem{prop}[thm]{Proposition}
\newtheorem{lem}[thm]{Lemma}
\newtheorem{cor}[thm]{Corollary}

\theoremstyle{definition}
\newtheorem{defn}[thm]{Definition}

\newtheorem{remark}[thm]{Remark}

\numberwithin{equation}{section}

\def\leq{\leqslant}
\def\geq{\geqslant}
\def\({\left(}
\def\){\right)}
\def\:{\colon}
\def\.{\cdot}
\def\o{\circ}
\def\ds{\displaystyle}
\def\phi{{\varphi}}
\def\epsilon{\varepsilon}
\def\lra{\longrightarrow}

\def\k{\Bbbk}
\def\Z{{\mathbb Z}}
\def\C{{\mathbb C}}
\def\Q{{\mathbb Q}}

\def\H{{\mathbb H}}

\def\CP{\C\mathrm P}
\def\HP{\H\mathrm P}

\def\MU{MU}
\def\MSp{MSp}
\def\BP{BP}
\def\BU{BU}
\def\Sp{Sp}
\def\BSp{BSp}
\def\U{\mathrm U}
\def\evMU{{}^{\mathrm{odd}}\MU}

\def\ideal{\triangleleft}
\def\homeq{\simeq}
\def\iso{\cong}
\def\d{\mathrm d}

\def\L{\mathrm L}

\def\mapsto>#1>#2>{\mapstochar\joinrel{\xrightarrow[#2]{#1}}}
\def\mapsfrom<#1<#2<{\xleftarrow@[#2]{#1}\joinrel{\mapstochar}}
\def\incl>#1>#2>{\lhook\joinrel\xrightarrow[#2]{#1}}
\def\rincl<#1<#2<{\xleftarrow@[#2]{#1}\joinrel\rhook}
\def\lcni<#1<#2<{\xleftarrow@[#2]{#1}\joinrel\rhook}

\DeclareMathOperator{\Hom}{Hom}
\DeclareMathOperator{\id}{id}
\DeclareMathOperator{\im}{im}
\DeclareMathOperator{\Invol}{Invol}

\def\f{\mathbf f}
\def\v{\mathbf v}
\def\T{\mathrm T}
\def\M{\mathrm M}
\def\X{\mathrm X}
\def\B{\mathrm B}
\def\Smash{\wedge}
\def\FGL{\mathbf{FGL}}
\def\evFGL{{{}^{\mathbf{odd}}\mathbf{FGL}}}
\def\Alg{\mathbf{Alg}}
\def\Times#1{\mathop{\times}_{#1}}
\def\oTimes#1{\mathop{\otimes}_{#1}}
\def\Plus#1{\mathop{+}_{#1}}

\def\pev{p^{\mathrm{odd}}}
\def\evS{{}^{\mathrm{odd}}\mathrm S}
\def\G{\mathrm{G}}
\def\Godd{\G_{\mathrm{odd}}}
\def\Gev{\G_{\mathrm{ev}}}
\def\L{\mathrm{L}}
\def\Lodd{\L_{\mathrm{odd}}}
\def\iff{\quad\Longleftrightarrow\quad}
\def\CPi{\CP^\infty}
\def\tilde{\widetilde}
\def\Dia#1{\mathop{\diamond}_{#1}}

\def\dsum{\ds\sum}

\title[$\MSp$ localized away from $2$ and odd formal group laws]
{$\MSp$ localized away from $2$ and odd formal group laws}
\author{Andrew Baker \& Jack Morava}
\address{University of Glasgow, Glasgow G12~8QW, Scotland.}
\email{a.baker@maths.gla.ac.uk}
\urladdr{http://www.maths.gla.ac.uk/$\sim$ajb}
\address{Johns Hopkins University, Baltimore, MD~21218, USA.}
\email{jack@math.jhu.edu}
\urladdr{http://mathnt.mat.jhu.edu/mathnew/Faculty/HomePages/jmorava.html}
\thanks{The authors would like to acknowledge the support of
l'Institute des Hautes \'Etudes Scientifiques, Osaka Prefecture, 
the EU and EPSRC and probably others, whilst parts of this work 
were undertaken since ca 1992. 
}
\keywords{symplectic cobordism, formal group law, symmetric space}
\subjclass[2000]{primary 55N22, 57R77, 57R90; secondary 14L05}

\begin{document}
\begin{abstract}
We investigate the relationship between complex and symplectic
cobordism localized away from the prime~$2$ and show that these
theories are related much as a real Lie group is related to its
complexification. This suggests that ideas from the theory of
symmetric spaces might be used to illuminate these subjects. 
In particular, we give an explicit equivalence of ring spectra
\[
\MSp[1/2]\Smash\Sp/\U_+\simeq\MU[1/2]
\] 
and deduce that $\MU[1/2]$ is a wedge of copies of $\MSp[1/2]$. 
We discuss the implications for the structure of the stable 
operation algebra $\MSp[1/2]^*\MSp[1/2]$ and the dual cooperation 
algebra $\MSp[1/2]_*\MSp[1/2]$. Finally we describe some related 
Witt vector algebra and apply our results to the study of formal 
involutions on the category of formal group laws over a 
$\Z[1/2]$-algebra.
\end{abstract}
\maketitle

\section*{Introduction}
Cobordism theory is a part of topology in which geometry encounters
homotopy theory in a particularly transparent way, and it is striking
that the algebra which results is often remarkably interesting. This
paper is concerned with complex and symplectic cobordism; it is
well-known that the latter is extremely complicated at the prime~$2$,
but we show that at odd primes, these two theories are related much
as a real Lie group is related to its complexification. Ideas from
the theory of symmetric spaces can thus be used as a guide to an
area where the structure is otherwise perplexing. The following
are our three main results.

Our main topological results are the following. Theorem~\ref{thm:2.3} 
shows that there is an equivalence of ring spectra $\MSp[1/2]\lra\evMU$, 
where $\evMU$ is a certain ring spectrum constructed with the aid of 
an idempotent map of ring spectra $\MU[1/2]\lra\MU[1/2]$, produced 
using formal group theory; the proof is directly modelled on the 
approach to splitting the $p$-localization of $\MU$ used by 
Quillen~\cite{Qu,JFASHGH}, and is consistent with it for all odd 
primes~$p$. In Theorem~\ref{thm:2.2}, the homotopy ring $\evMU_*$ 
is characterized as universal for a class of formal group laws over 
$\Z[1/2]$-algebras that we call \emph{odd}. In Theorem~\ref{thm:2.5} 
we show that there is an equivalence of the ring spectra $\MU[1/2]$ 
and $\MSp[1/2]\Smash\Sp/\U_+$.

In Theorem~\ref{thm:3.4}, we give an algebraic characterisation
of the ring 
\[
\MSp[1/2]_*\MSp[1/2]\iso\evMU_*\evMU
\] 
using an identification of the second Hopf algebroid in terms 
of isomorphisms of odd formal group laws; this is related to 
work of Buchstaber~\cite{VMB}, the precise connection being stated 
in Proposition~\ref{prop:3.3}. We obtain a related identification 
of the stable operation algebra in the cohomology theory 
$\MSp[1/2]^*(\ )$ with a subalgebra of that for $\MU[1/2]^*(\ )$; 
in keeping with work of Buchstaber \& Shokurov
and Morava~\cite{BS,Mor}, this realizes an action of the even positive
half of the formal vector fields Lie algebra associated to the
diffeomorphism group of the circle. This seems to have connections
with the KdV equation and Witten's topological field theory, as 
well as the work of Katsura, Shimizu \& Ueno~\cite{KSU}.

We reformulate our topological results in scheme theoretic language
and relate them to the scheme of formal involutions viewed as a sort
of `symmetric space'. We also discuss some Witt vector algebra related
to both the algebraic and topological aspects of our work.

We would like to thank Victor Buchstaber and Nigel Ray for teaching
us many things relevant to this work; also the referee of a previous
version for detailed suggestions on improving the exposition.

\section{Some calculations with the $[-1]$-series of a formal group law}
\label{sec:1}

Throughout this section we will work with a graded commutative and
unital ring $R=R_*$, which we assume is torsion free, hence can be
embedded in its rationalization $R\Q=R\otimes\Q$. Let
$F(X,Y)=\dsum_{i,j}a^F_{i,j}X^iY^j$ denote a commutative $1$-dimensional
formal group law over $R$, where $a^F_{i,j}\in R_{2i+2j-2}$. Over
the ring $R\Q$, $F$ admits unique logarithm $\log^F(X)$ and exponential
$\exp^F(X)$, characterized by the three properties
\begin{align*}
\log^F(X)&\equiv X\mod{(X^2)},\\
\exp^F(\log^F(X))&=X,\\
F(X,Y)&=\exp^F(\log^F(X)+\log^F(Y)).
\end{align*}
The $[-1]$-series $[-1]_F(X)\in R[[X]]$ is characterized by 
the identity
\[
F(X,[-1]_F(X))=0,
\]
and also satisfies
\[
[-1]_F(X)=\exp^F(-\log^F(X))\equiv -X\mod{(X^2)}.
\]
We wish to understand the relationship between the two series
$S^F(X)=X+[-1]_F(X)$ and $P^F(X)=-X[-1]_F(X)$, both of which lie
in the ring $R[[X]]\subset R\otimes\Q[[X]]$. Consider the ring
automorphism of $R[[X]]$ given by
\[
\alpha\:f(X)\longleftrightarrow f([-1]_FX).
\]
By induction on $n\geq0$, the fixed subring of the quotient ring
$R_n=R[[X]]/(X^{2n+1})$ is easily seen to be
\[
R_n^\alpha=R[[P^F(X)]]/(X^{2n+1}),
\]
where the inductive step uses the fact that
\[
\alpha(X^{2n+1})\equiv-X^{2n+1}\mod{(P^F(X)^{n+1})}.
\]
A straightforward argument using a limit in the $X$-adic topology
now identifies the fixed subring of $R[[X]]$ as
$R[[X]]^\alpha=R[[P^F(X)]]$. Thus there must be an expansion
\[
S^F(X)=\sum_{r\geq1}c^F_{2r-1}P^F(X)^r
\]
with $c^F_{2r-1}\in R_{4r-2}$. These coefficients $c^F_{2r-1}$
can be determined using Lagrange Inversion, which we next recall
in a form suitable for our purposes.

Let $R((Z))=R[[Z]][Z^{-1}]$, be the ring of finite tailed Laurent 
series in $Z$ over $R$. For $f(Z)=\dsum_{n_0\leq n}a_nZ^n\in R((Z))$,
we will write
\[
[f(Z)]_{Z^n}=a_n=\text{coefficient of $Z^n$}
\]
and let
\[
\oint f(Z)\,\d Z=\left[f(Z)\right]_{Z^{-1}}
\]
be the \emph{residue of $f(Z)$ with respect to $Z$}. Let
$f'(Z)=\ds\sum_{n_0-1\leq n}(n+1)a_{n+1}Z^n$ denote the
formal derivative of $f(Z)$ with respect to $Z$.
\begin{thm}[Lagrange Inversion]
\label{thm:LI}
Let $f(Z)=\ds\sum_{n_0\leq n}a_nZ^n\in R((Z))$ and let
$h(Z)\in R[[Z]]$ with $h(Z)\equiv Z\mod{(Z^2)}$. Then for
the expansion $f(Z)=\dsum_{n_0\leq n}c_nh(Z)^n$, we have
\[
c_n=\oint\frac{f(Z)h'(Z)\,\d Z}{h(Z)^{n+1}},
\quad\text{for $n\geq n_0$}.
\]
\end{thm}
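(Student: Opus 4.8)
The plan is to reduce everything to a single residue computation, exploiting the fact that the powers of $h$ behave like an orthonormal system under the residue pairing. First I would record the structural fact underlying the whole argument: since $h(Z)\equiv Z\mod{(Z^2)}$, we may write $h(Z)=Zu(Z)$ with $u(Z)\in R[[Z]]$ a unit (as $u(0)=1$). Consequently $h(Z)^m=Z^mu(Z)^m$ is a well-defined element of $R((Z))$ for \emph{every} integer $m$, positive or negative, and the expansion $f(Z)=\dsum_{n_0\leq n}c_nh(Z)^n$ makes sense $Z$-adically, since $h(Z)^n$ has leading term $Z^n$ and so only finitely many summands contribute to each coefficient.

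The heart of the matter is the residue identity $\oint h(Z)^kh'(Z)\,\d Z=\delta_{k,-1}$ for $k\in\Z$. I would split this into two cases. For $k\neq-1$, note that $h(Z)^kh'(Z)=\frac{1}{k+1}\bigl(h(Z)^{k+1}\bigr)'$ is the formal derivative of a Laurent series, and the residue of any derivative vanishes, since the $Z^{-1}$-coefficient of $g'(Z)$ is $0\cdot a_0=0$. For $k=-1$ one computes the logarithmic derivative: writing $h=Zu$ gives $h'/h=Z^{-1}+u'/u$, and since $u'/u\in R[[Z]]$ contributes no pole, the residue equals $1$.

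With this lemma in hand, I would substitute the expansion of $f$ into the proposed formula and interchange the (locally finite) sum with the residue:
\[
\oint\frac{f(Z)h'(Z)}{h(Z)^{n+1}}\,\d Z
=\dsum_{n_0\leq m}c_m\oint h(Z)^{m-n-1}h'(Z)\,\d Z
=\dsum_{n_0\leq m}c_m\delta_{m-n-1,-1}
=c_n.
\]

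The main obstacle I anticipate is purely bookkeeping: justifying the interchange of summation and residue. This rests on the observation that $h(Z)^{m-n-1}h'(Z)$ has leading term $Z^{m-n-1}$, so it can contribute to the $Z^{-1}$-coefficient only when $m\leq n$; hence only finitely many terms of the sum are relevant to the residue, and the interchange is legitimate termwise. No genuine analytic difficulty arises, since all manipulations take place formally in $R((Z))$.
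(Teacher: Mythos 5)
Your proof is correct. The paper itself gives no proof of this statement --- it is recalled as a standard fact and used as a black box --- so there is nothing to compare against; your argument (write $h(Z)=Zu(Z)$ with $u$ a unit, prove the orthogonality relation $\oint h(Z)^kh'(Z)\,\d Z=\delta_{k,-1}$ via the antiderivative trick for $k\neq-1$ and the logarithmic derivative for $k=-1$, then interchange the locally finite sum with the residue) is the standard one and is complete. The only point worth flagging is that your identity $h(Z)^kh'(Z)=\tfrac{1}{k+1}\(h(Z)^{k+1}\)'$ involves division by $k+1$; this is harmless here because the section's standing hypothesis that $R$ is torsion free lets you embed $R((Z))$ in $R\Q((Z))$ and read off the $Z^{-1}$-coefficient there, but you should say so explicitly (or argue directly that $(k+1)$ times the residue vanishes and cancel the integer).
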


To calculate such a residue, we can use change of variable,
integration by parts and other standard techniques from
elementary calculus. We list here some that we require, their
proofs being straightforward formal versions of results from
calculus.
\begin{prop}\label{prop:1.1}
Let $f(Z),g(Z)\in R((Z))$, and let $h(Z)\in R[[Z]]$ satisfy
$h(Z)\equiv Z\mod{(Z^2)}$. Then we have
\begin{align}
\oint f(Z)\,\d Z&=\oint f(h(Z))h'(Z)\,\d Z,
\tag*{(a): Change of Variable}\\
\oint\frac{\d f(Z)}{\d Z}\,\d Z&=0,
\tag*{(b): Exactness}\\
\oint f(z)\frac{\d g(Z)}{\d Z}\,\d Z
&=-\oint g(z)\frac{\d f(Z)}{\d Z}\,\d Z.
\tag*{(c): Integration by Parts}
\end{align}
\end{prop}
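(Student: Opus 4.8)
The plan is to prove Proposition~\ref{prop:1.1} by establishing the three formal identities directly from the definition of the residue as the coefficient of $Z^{-1}$, treating each as a purely algebraic statement about Laurent series. The guiding principle throughout is that all three assertions are $R$-linear in the series involved, so it suffices to verify them on monomials $Z^k$ and then extend by linearity; the convergence issues are handled automatically because every element of $R((Z))$ is a finite-tailed series and the substitution $h(Z)$ with $h(Z)\equiv Z\mod{(Z^2)}$ preserves finite-tailedness and acts invertibly on the filtration by powers of $Z$.

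For part~(b), Exactness, I would simply observe that $\ds\frac{\d}{\d Z}Z^k = kZ^{k-1}$ has a $Z^{-1}$ coefficient equal to zero for every integer $k$: when $k\neq 0$ the derivative of $Z^k$ is a multiple of $Z^{k-1}$, which equals $Z^{-1}$ only when $k=0$, in which case the derivative vanishes entirely. Thus $\oint (\d f/\d Z)\,\d Z=0$ on every monomial, hence on all of $R((Z))$ by linearity. For part~(c), Integration by Parts, the plan is to apply the Leibniz rule $\ds\frac{\d}{\d Z}(fg)=f'g+fg'$, take residues of both sides, and invoke part~(b) to kill the left-hand side, yielding $\oint f g'\,\d Z = -\oint g f'\,\d Z$ immediately.

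The least formal and hence most delicate step is part~(a), Change of Variable. Here I would reduce, by linearity, to showing $\oint Z^k\,\d Z=\oint h(Z)^k h'(Z)\,\d Z$ for every integer $k$. When $k\neq -1$, the right-hand side is a residue of a derivative, since $h(Z)^k h'(Z)=\ds\frac{1}{k+1}\frac{\d}{\d Z}\bigl(h(Z)^{k+1}\bigr)$, and so it vanishes by part~(b); the left-hand side $\oint Z^k\,\d Z$ also vanishes since $k\neq -1$. The only surviving case is $k=-1$, where the claim becomes $\oint h'(Z)/h(Z)\,\d Z=1$: writing $h(Z)=Z\,u(Z)$ with $u(Z)\equiv 1\mod{(Z)}$ a unit in $R[[Z]]$, one has $h'/h = 1/Z + u'/u$, and since $u'/u\in R[[Z]]$ contributes no $Z^{-1}$ term, the residue is exactly~$1$.

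I expect the main obstacle to be not any single computation but the care needed to justify that these manipulations are legitimate for Laurent series with negative powers, where the usual ``integrate a derivative'' arguments from calculus must be checked to respect residues rather than antiderivatives; in particular, one must confirm that $h(Z)^k$ is a well-defined element of $R((Z))$ for negative $k$, which uses that $h(Z)$ is $Z$ times a unit so that $h(Z)^{-1}$ lies in $R((Z))$. Once this framework is in place, each identity follows by the monomial reduction described above, and the proofs are the straightforward formal analogues of their calculus counterparts as the statement promises.
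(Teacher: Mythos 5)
Your proof is correct and is precisely the ``straightforward formal version of results from calculus'' that the paper alludes to without writing out: the monomial reduction, the vanishing of residues of derivatives for (b), Leibniz plus (b) for (c), and the split of (a) into the cases $k\neq-1$ (where $h(Z)^kh'(Z)=\tfrac{1}{k+1}\tfrac{\d}{\d Z}h(Z)^{k+1}$) and $k=-1$ (where $h'/h=1/Z+u'/u$ with $h=Zu$, $u$ a unit) supply exactly the details the paper omits. The one point worth making explicit is that cancelling the factor $k+1$ in the case $k\leq-2$ of (a) relies on the standing assumption from the start of Section~\ref{sec:1} that $R$ is torsion free (so one may work in $R\Q$); over a ring with torsion that case of the change-of-variable formula is genuinely more delicate.
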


We can use Lagrange Inversion, Theorem~\ref{thm:LI}, to
calculate the coefficients $c^F_{2r-1}$. First make the
changes of variable $X=\exp^F(Z)$ and $Y=Z^2$. Then
\[
S^F\(\exp^F(Z)\)=\sum_{k\geq1}2e^F_{2k-1}Y^k,
\]
where $\exp^F(Z)=\dsum_{k\geq0}e^F_kZ^{k+1}$ for
$e^F_n\in R_{2n}\otimes\Q$. Setting $Q(Y)=P^F\(\exp^F(Z)\)$
and $\bar Q(Y)=Q(Y)/Y=1+\cdots$, we obtain
\[
c^F_{2r-1}=
\oint\frac{\(\sum_{k\geq1}2e^F_{2k-1}Y^k\)Q'(Y)\,\d Y}{Q(Y)^{r+1}}.
\]
Integrating by parts and interchanging the summation and integral
signs, we obtain
\begin{align*}
c^F_{2r-1}
&=\sum_{r\geq k\geq1}\frac{2k}{r}e^F_{2k-1}
                             \oint\frac{Y^{k-1}\,\d Y}{Q(Y)^{r}}
\\
&=\sum_{r\geq k\geq1}\frac{2k}{r}e^F_{2k-1}
                          \left[\bar Q(Y)^{-r}\right]_{Y^{r-k}}.
\end{align*}
Notice that the coefficient $c^F_{2^t-1}$ has the form
\[
c^F_{2^t-1}=2e^F_{2^t-1}+\text{decomposables in $R\Q$}.
\]
In the cases $R_*=\MU_*$ or $R_*=\BP_*$ at the prime~$2$ (equipped
with their canonical formal group laws) Milnor's criterion tells
us that $c^F_{2^t-1}$ is a polynomial generator; indeed, for the
case of $\BP_*$, this gives a complete set of polynomial generators
for $\BP_*$ over $\Z_{(2)}$.

The elements $e^F_n\in R_{2n}\otimes\Q$ may be interpreted as
formal symmetric functions in infinitely many variables $t_i$.
More precisely, we view $e^F_n$ as the $n$-th elementary symmetric
function $\sigma_n(t)=\sum t_1t_2\cdots t_n$, obtained by
symmetrising the monomial $t_1t_2\cdots t_n$. Thus we have the
formal expansion
\[
\sum_{n\geq0}e^F_nZ^n=\prod_{i}\(1+t_iZ\),
\]
from which it follows that
\[
\bar Q(Y)=\prod_{i}\(1-t_i^2Y\).
\]
Hence we deduce that
\begin{align*}
\bar Q(Y)&=\sum_{n\geq0}q^F_nY^n
\\
&=\sum_{n\geq0}(-1)^n\sigma_n(t^2)Y^n,
\end{align*}
where $\sigma_n(t^2)=\sum t_1^2t_2^2\cdots t_n^2$ is the $n$-th
elementary symmetric function in the $t_i^2$. It is easily seen
that
\[
\sigma_n(t^2)
=2e^F_{2n}-2e^F_{2n-1}e^F_{1}+\cdots+(-1)^n\(e^F_{n}\)^2.
\]
Of course we actually want to know the first few coefficients
in $\bar Q(Y)^{-n}$ for $n\geq1$, and these will be complicated
polynomials in the $\sigma_k(t^2)$ for $1\leq k\leq n$. An
alternative approach is to use the total symmetric functions
rather than the elementary functions.

Recall that the $n$-th total symmetric function $\tau_n(t)$ in
the variables $t_i$ is obtained by the summing up all the monomials
of the form $t_1^{r_1}t_2^{r_ 2}\cdots t_n^{r_n}$ where
$0\leq r_1\leq r_2\leq\cdots\leq r_n$ and $r_1+r_2+\cdots r_n=n$,
and then symmetrising over all $t_i$'s. The generating function
for the $\tau_n(t)$ is
\[
\sum_{0\leq n}\tau_n(t)Z^n=\prod_{i}\(1-t_iZ\)^{-1}.
\]
In the context above where we set $e^F_n=\sigma_n(t)$, thus we
have
\[
\bar Q(Y)^{-1}=\sum_{0\leq n}\tau_n(t^2)Y^n
\]
and we set $h^F_{n}=\tau_n(t^2)$ which is an integer polynomial
in the elements $q^F_k$. Thus to evaluate $c^F_{2r-1}$ we are
reduced to a calculation with polynomials in the~$h^F_{n}$.

We end this section with another technical result required later.
The proof is a well known application of Newton iteration.
\begin{thm}[Hensel's Lemma]
\label{thm:HL}
Let $R$ be a commutative, unital ring which is complete with respect
to the $I$-adic topology for some ideal $I\ideal R$ and let
$f(Z)\in R[[Z]]$ be a power series such that $f(z_0)\equiv 0\mod{I}$
for some $z_0\in R$ and the formal derivative $f'(z_0)$ is a unit
in $R$. Then the sequence $z_n\in R$ given by
\[
z_{n+1}=z_n-\frac{f(z_n)}{f'(z_0)}
\]
converges $I$-adically to a limit $z$ satisfying $f(z)=0$.
\end{thm}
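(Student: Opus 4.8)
The plan is to show that the Newton sequence converges by tracking how rapidly $f(z_n)$ sinks into the powers of $I$. Write $u=f'(z_0)$, which is a unit by hypothesis, so that the recursion reads $z_{n+1}=z_n-f(z_n)/u$. I would first prove, by induction on $n\geq0$, the two assertions
\[
f(z_n)\in I^{n+1},\qquad z_{n+1}-z_n\in I^{n+1}.
\]
The second follows from the first because $u$ is a unit, and the base case $f(z_0)\in I$ is exactly the hypothesis $f(z_0)\equiv0\mod{I}$.

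For the inductive step I would use the formal first-order Taylor expansion of $f$ about $z_n$. Over an arbitrary commutative ring this is legitimate provided one never divides by factorials: expanding each monomial $(z_n+\delta)^k$ by the binomial theorem, whose coefficients are integers, yields
\[
f(z_n+\delta)=f(z_n)+f'(z_n)\delta+\delta^2 g(z_n,\delta)
\]
for a suitable power series $g$. Setting $\delta=\delta_n=z_{n+1}-z_n=-f(z_n)/u\in I^{n+1}$, the quadratic remainder lies in $I^{2(n+1)}\subseteq I^{n+2}$. Since every increment lies in $I$ we have $z_n\equiv z_0\mod{I}$, hence $f'(z_n)\equiv f'(z_0)=u\mod{I}$ and therefore $f'(z_n)\delta_n\equiv u\delta_n\mod{I^{n+2}}$. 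Combining these gives
\[
f(z_{n+1})\equiv f(z_n)+u\delta_n=f(z_n)-f(z_n)=0\mod{I^{n+2}},
\]
which closes the induction.

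With the induction in hand the conclusion is immediate. The increments $z_{n+1}-z_n$ lie in $I^{n+1}$, so $(z_n)$ is a Cauchy sequence and, by completeness, converges $I$-adically to some $z\in R$; writing $z-z_n=\sum_{m\geq n}(z_{m+1}-z_m)$ shows $z-z_n\in I^{n+1}$. The same Taylor estimate applied to $f(z)-f(z_n)$ then gives $f(z)\equiv f(z_n)\equiv0\mod{I^{n+1}}$ for every $n$; since the $I$-adic topology is separated, $f(z)\in\bigcap_n I^n=0$, so $f(z)=0$.

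The main point requiring care is the Taylor expansion: because $R$ need not contain $\Q$, one must work with the integer-coefficient binomial expansion rather than divided-power derivatives, and then verify that the \emph{modified} Newton step---which uses the fixed unit $u=f'(z_0)$ in place of $f'(z_n)$---still deepens the congruence by one power of $I$ at each stage. This is precisely the role of the congruence $f'(z_n)\equiv f'(z_0)\mod{I}$, and it is why only linear (rather than quadratic) improvement is obtained, which nonetheless suffices. One should also check that the evaluations $f(z_n)$ are well-defined---automatic when $f$ is a polynomial or when $z_0\in I$---as is presupposed by the requirement that $f(z_0)$ make sense.
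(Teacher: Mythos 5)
Your argument is correct and is precisely the ``well known application of Newton iteration'' that the paper invokes without writing out: the induction $f(z_n)\in I^{n+1}$ via the integer-coefficient Taylor expansion, using $f'(z_n)\equiv f'(z_0)\mod{I}$ to justify the fixed denominator $u=f'(z_0)$, is the standard argument. Your closing remarks on the well-definedness of the evaluations $f(z_n)$ and on why only linear (not quadratic) deepening occurs are apt refinements of what the paper leaves implicit.
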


\section{A universal formal group law for $\MSp[1/2]$}
\label{sec:2}

The main results of this section are the following. First we
follow Quillen in using use formal group theory to prove
\begin{thm}\label{thm:2.3}
There is an idempotent map of ring spectra
$\epsilon_2\:\MU[1/2]\lra\MU[1/2]$ for with image ring spectrum
$\evMU$ and a canonical orientation $\MSp\lra\evMU$ inducing
an equivalence of ring spectra $\MSp[1/2]\lra\evMU$.
\end{thm}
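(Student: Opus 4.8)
The plan is to follow Quillen's idempotent-splitting strategy, adapted to produce a symplectic—rather than a $p$-local complex—summand. The key geometric input is that $\MU[1/2]$ carries a canonical complex-orientation with associated formal group law $\FU$ over $\MU[1/2]_*$, and that a symplectic orientation amounts to an orientation whose formal group law is \emph{even}, in the sense that it is preserved under the involution $X\mapsto[-1]_{\FU}(X)$. The calculations of Section~\ref{sec:1} are precisely designed to detect this symmetry: the series $S^F(X)=X+[-1]_F(X)$ measures the failure of the $[-1]$-series to be the naive negation $-X$, and the expansion $S^F(X)=\sum_{r\geq1}c^F_{2r-1}P^F(X)^r$ shows that this failure is governed by the odd-degree classes $c^F_{2r-1}\in R_{4r-2}$.

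First I would use the universality of $\MU[1/2]_*$ for formal group laws over $\Z[1/2]$-algebras to construct a ring endomorphism of $\MU[1/2]_*$ by sending $\FU$ to the formal group law obtained by a suitable averaging or symmetrisation procedure under $\alpha\:X\mapsto[-1]_{\FU}(X)$; concretely, the coefficients $c^F_{2r-1}$ (together with the even-degree Lagrange-Inversion data) let one write down an explicit self-map of the Lazard ring localized at $2$ whose square agrees with itself. By Landweber–Novikov–type representability (or directly, since $\MU[1/2]$ is a Landweber-exact, hence well-understood, ring spectrum and every endomorphism of its formal group law is realized by a map of ring spectra), this algebraic idempotent $\epsilon_2$ on $\MU[1/2]_*$ lifts to an idempotent map of ring spectra $\epsilon_2\:\MU[1/2]\lra\MU[1/2]$. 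I would then \emph{define} $\evMU$ to be the image (mapping telescope / homotopy colimit along $\epsilon_2$) of this idempotent, which is automatically a ring spectrum since $\epsilon_2$ is multiplicative and $\MU[1/2]$ is a commutative ring spectrum with $2$ inverted.

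Next I would produce the canonical orientation $\MSp\lra\evMU$. The composite $\MSp\lra\MU$ given by forgetting the symplectic structure to the underlying complex structure, followed by localization $\MU\lra\MU[1/2]$ and projection onto the summand $\evMU$, supplies a map of ring spectra; the point is that a symplectic bundle's underlying complex formal group law is fixed by $\alpha$ (its $[-1]$-series satisfies the relevant evenness), so the orientation lands in the $\epsilon_2$-fixed part and factors canonically through $\evMU$. To check this is an equivalence, I would pass to homotopy groups and compare $\MSp[1/2]_*$ with $\evMU_*=\im(\epsilon_2)_*$. Here one uses that $\MSp[1/2]_*$ is a polynomial ring on generators in degrees $4r-4$ (the localization away from $2$ killing all the notorious torsion and phantom behaviour at $2$), and that the images of these generators are detected by exactly the odd-indexed Lagrange coefficients $c^F_{2r-1}$, the remark after Proposition~\ref{prop:1.1} showing that $c^F_{2^t-1}$ is, modulo decomposables, twice a polynomial generator and hence (with $2$ inverted) a genuine generator.

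The main obstacle is verifying that the symplectic orientation map is an isomorphism on homotopy rather than merely injective or surjective: one must show that $\MSp[1/2]_*\lra\evMU_*$ is bijective, which reduces to a clean identification of the image of $\epsilon_2$ on coefficients with the polynomial subalgebra generated by the $c^F_{2r-1}$. This is where the explicit residue formulas of Section~\ref{sec:1} do the real work, and where one must be careful that inverting $2$ (rather than localizing at a single odd prime, as Quillen does) still yields a splitting compatible with all odd primes simultaneously—this integral-away-from-$2$ coherence is exactly what distinguishes the present argument from the classical $p$-local one and is the step I would scrutinise most carefully.
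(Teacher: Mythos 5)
Your overall skeleton (Quillen idempotent, reorientation, canonical map $\MSp\lra\evMU$, comparison of coefficients) matches the paper's, but two of your key steps have genuine gaps.

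First, the class of formal group laws you single out is not the right one, and as a result your idempotent is never actually constructed. A formal group law is \emph{always} ``preserved under the involution $X\mapsto[-1]_F(X)$'' --- the $[-1]$-series is an automorphism of every $F$ --- so that condition is vacuous. The relevant condition is that $[-1]_F(X)=-X$ on the nose (the paper's \emph{odd} formal group laws), and the ``averaging'' you allude to must be made explicit: the paper reorients by the strict isomorphism $\phi(X)=[1/2]_\MU\(F^\MU(X,[-1]_\MU(-X))\)$, whose effect on logarithms is to replace $\log^\MU$ by its odd part $\frac12(\log^\MU(X)-\log^\MU(-X))$; idempotency is then visible rationally as $m_n\mapsto 0$ for $n$ odd and $m_n\mapsto m_n$ for $n$ even. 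Without this formula you cannot verify that your endomorphism squares to itself, and the coefficients $c^F_{2r-1}$ of Section~\ref{sec:1} play no role in defining it. Also, the mechanism lifting the algebraic map to a map of ring spectra is the correspondence between multiplicative operations out of $\MU$ and orientations (a new class $y\equiv x\bmod (x^2)$), not Landweber exactness, which produces homology theories from modules rather than ring maps from coefficient endomorphisms.

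Second, your verification that $\MSp[1/2]_*\lra\evMU_*$ is an isomorphism does not work as stated. You propose to detect the polynomial generators of $\MSp[1/2]_*$ (degrees $4r$) by the classes $c^F_{2r-1}$, but these live in degrees $4r-2$, and the paper's remark that $c^F_{2^t-1}$ is a polynomial generator concerns $\MU_*$ and $\BP_*$ \emph{at the prime $2$}; Section~\ref{sec:1} is not used in this theorem at all. Moreover, quoting the polynomiality of $\MSp[1/2]_*$ up front is close to circular, since that fact is itself established by the kind of argument the theorem requires. The paper instead runs the comparison in ordinary homology with $\Z[1/2]$ coefficients: the symplectic orientation $w\in\evMU^4(\HP^\infty)$ pulls back to $y^2$ on $\CP^\infty$ (here oddness of $F'$ is used, via $-[-1]_{F'}(y)=y$), the map $\CP^\infty\lra\HP^\infty$ is surjective on $H_*(\ ;\Z[1/2])$, and one concludes that $H_*(\MSp;\Z[1/2])\lra H_*(\evMU;\Z[1/2])$ is an isomorphism; an $H_*(\ ;\Z[1/2])$-isomorphism of connective spectra of finite type is then an equivalence after inverting $2$. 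Your worry about ``integral-away-from-$2$ coherence'' is, by contrast, not an issue: the explicit formula for $\phi$ only requires $1/2\in R$, so the construction is uniform away from $2$ with nothing further to check.
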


Next we deduce a topological result which in effect shows
that $\MU[1/2]$ is a `Hopf algebra over $\MSp$' in the sense
that the natural product and diagonal maps of $\Sp/\U$ give
$\MSp[1/2]_*\Sp/\U$ the structure of a Hopf algebra over
$\MSp[1/2]_*$.
\begin{thm}\label{thm:2.5}
There is an equivalence of ring spectra
\[
\MSp[1/2]\Smash\Sigma^\infty(\Sp/\U_+)\lra\MU[1/2].
\]
\end{thm}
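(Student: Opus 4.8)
The plan is to build the equivalence $\MSp[1/2]\Smash\Sigma^\infty(\Sp/\U_+)\lra\MU[1/2]$ by combining the splitting of Theorem~\ref{thm:2.3} with the classical fibration relating $\Sp/\U$ to complex and symplectic bundle theory. First I would recall that $\Sp/\U$ sits in the fibration $\U\lra\Sp\lra\Sp/\U$, so that the coset space $\Sp/\U$ parametrises the complex structures on a symplectic vector space compatible with the symplectic form; homotopically this gives a map $\Sp/\U\lra\BU$ classifying the universal such complex bundle, and correspondingly a Thom-spectrum-level comparison between the symplectic and complex Thom spectra. The idea is that smashing $\MSp[1/2]$ with $\Sigma^\infty(\Sp/\U_+)$ has the effect of ``forgetting'' the symplectic structure down to the underlying complex one, which is exactly what $\MU[1/2]$ records.

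The construction of the map itself I would obtain as follows. Using the canonical orientation $\MSp\lra\evMU$ from Theorem~\ref{thm:2.3} together with the standard forgetful orientation $\MSp[1/2]\lra\MU[1/2]$ (induced by the inclusion $\U\hookrightarrow\Sp$ of structure groups, i.e.\ complexification of symplectic bundles), one gets a diagram of ring spectra. The homogeneous space $\Sp/\U$ carries the data interpolating these two orientations: a point of $\Sp/\U$ is precisely a reduction of structure group, so the evaluation map $\Sp/\U_+\Smash\MSp\lra\MU$ assembles into the desired ring map after smashing with $\MSp[1/2]$ and inverting~$2$. Concretely I would define the comparison as the composite in which $\MSp[1/2]$ supplies the ``odd'' (real) part via the equivalence $\MSp[1/2]\homeq\evMU$, while $\Sigma^\infty(\Sp/\U_+)$ supplies the complementary even part, so that the smash product reconstitutes all of $\MU[1/2]$.

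To prove this map is an equivalence I would pass to homotopy (or homology) and check it is an isomorphism. By Theorem~\ref{thm:2.3} we may replace $\MSp[1/2]$ by $\evMU$, so the source becomes $\evMU\Smash\Sigma^\infty(\Sp/\U_+)$, whose homotopy is $\evMU_*\otimes_{\Z[1/2]}H_*(\Sp/\U;\Z[1/2])$ provided $H_*(\Sp/\U;\Z[1/2])$ is free and the relevant K\"unneth/Atiyah--Hirzebruch spectral sequence collapses. The essential input here is the well-known fact that $H_*(\Sp/\U;\Z[1/2])$ is a free $\Z[1/2]$-module (the odd torsion is absent and the $2$-torsion has been inverted), concentrated in even degrees with a polynomial-type structure that matches the ``complementary'' generators distinguishing $\MU_*$ from $\evMU_*$. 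I would then show that the induced map on homotopy is the tensor-product decomposition of $\MU[1/2]_*$ into the $\epsilon_2$-invariant part (identified with $\evMU_*\iso\MSp[1/2]_*$) tensored with the complementary part (identified with $H_*(\Sp/\U)[1/2]$), which is exactly the algebraic shadow of the idempotent splitting.

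The main obstacle, I expect, is the bookkeeping in this last identification: one must verify that the complementary summand produced by the idempotent $\epsilon_2$ on $\MU[1/2]_*$ is canonically isomorphic \emph{as a coalgebra} to $H_*(\Sp/\U;\Z[1/2])$, so that the resulting structure is genuinely that of a Hopf algebra over $\MSp[1/2]_*$ as advertised. This requires matching the formal-group-law recipe for $\epsilon_2$ (the construction underlying $\evMU$, built from the $[-1]$-series calculations of Section~\ref{sec:1}) against the geometric product and diagonal on $\Sp/\U$ coming from the symplectic group structure. Checking that these two a~priori different decompositions of $\MU[1/2]_*$ coincide---rather than merely being abstractly isomorphic---is where the real work lies; everything else reduces to the collapse of a spectral sequence and a dimension count, both of which follow once $2$ is inverted.
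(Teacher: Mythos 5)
There is a genuine gap in the construction of the map itself. The natural target of the geometry you describe is not $\MSp[1/2]\Smash\Sigma^\infty(\Sp/\U_+)$ but $\MSp[1/2]\Smash\M\chi$, where $\chi\lra\Sp/\U$ is the pullback along $\Sp/\U\lra\B\U$ of the universal virtual bundle: the Whitney-sum maps $\B\Sp(k)\times\Sp(n)/\U(n)\lra\B\U(2k+n)$ are covered by bundle maps, and Thom-ifying them produces a ring map $\MSp\Smash\M\chi\lra\MU$. Your ``evaluation map $\Sp/\U_+\Smash\MSp\lra\MU$'' does not exist as described --- there is no nontrivial natural map out of the suspension spectrum $\Sigma^\infty(\Sp/\U_+)$ into $\MU$; what exists is the Thom class $\M\chi\lra\MU$. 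The missing idea is the untwisting equivalence $E\Smash\M\xi\homeq E\Smash\Sigma^\infty(B_+)$ for a virtual bundle $\xi\lra B$ orientable in $E$ (Mahowald--Ray, Lemma~\ref{lem:2.4}), applied with $E=\MSp[1/2]$ and $\xi=\chi$; the required $\MSp[1/2]$-orientation of $\chi$ is exactly what Theorem~\ref{thm:2.3} supplies, since $\MSp[1/2]\homeq\evMU$ is complex orientable. Without this step you cannot pass from $\M\chi$ to $\Sigma^\infty(\Sp/\U_+)$, and the left-hand side of the theorem never appears.

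The verification is also off target. The matching of the idempotent decomposition of $\MU[1/2]_*$ with a geometric decomposition, which you correctly flag as ``where the real work lies,'' is neither carried out in your proposal nor actually needed. The efficient check is in ordinary $\Z[1/2]$-homology: the H-space map $\B\Sp\times\Sp/\U\lra\B\U$ induces an isomorphism on $H_*(\ ;\Z[1/2])$ (the classical odd-primary splitting of $\B\U$), and the Thom isomorphisms for $\MSp$, $\M\chi$ and $\MU$ transfer this to an isomorphism $H_*(\MSp\Smash\M\chi;\Z[1/2])\lra H_*(\MU;\Z[1/2])$; since all spectra involved are connective with free $\Z[1/2]$-homology, this yields the equivalence without any computation of $\MU[1/2]_*$ or of the image of $\epsilon_2$. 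I recommend you restructure the argument around the Thom spectrum $\M\chi$, the Whitney-sum map, and the Mahowald--Ray lemma.
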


Finally we obtain
\begin{thm}\label{thm:2.6}
There is an equivalence of ring spectra
\[
\MU[1/2]\homeq\bigvee_{J}\Sigma^{2|J|}\MSp[1/2]
\]
where $J$ ranges over all non-decreasing finite odd sequences
\[
J=(1\leq2j_1-1\leq2j_2-1\leq\cdots\leq2j_\ell-1),
\]
including the empty sequence $()$, and
$|J|=\dsum_{1\leq t\leq\ell}(2j_t-1)$; moreover, these summands
are indexed by a polynomial algebra $\Z[1/2][u_J:J\neq()]$ with
$|u_J|=2|J|$ and have multiplication compatible with this product.
\end{thm}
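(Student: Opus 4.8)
The plan is to combine the two preceding theorems: Theorem~\ref{thm:2.5} supplies the geometric model and the freeness of the relevant module, while the retract coming from Theorem~\ref{thm:2.3} pins down the multiplicative (polynomial) structure. Concretely, the ring equivalence $\MSp[1/2]\Smash\Sigma^\infty(\Sp/\U_+)\homeq\MU[1/2]$ of Theorem~\ref{thm:2.5} reduces the problem to understanding the space $\Sp/\U$ away from~$2$; I will then realise the resulting free $\MSp[1/2]$-module basis by a wedge of suspensions of $\MSp[1/2]$ and identify this basis with the monomial basis of a polynomial algebra.

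First I would compute $H_*(\Sp/\U;\Z[1/2])$ from the fibration $\Sp/\U\lra\BU\lra\BSp$ associated with $\U\hookrightarrow\Sp$. Over $\Z[1/2]$ the images $r_i\in H^{4i}(\BU)$ of the symplectic Pontryagin classes form a regular sequence, so the Eilenberg--Moore spectral sequence collapses to give
\[
H^*(\Sp/\U;\Z[1/2])\iso\Z[1/2][c_1,c_2,\dots]/(r_1,r_2,\dots).
\]
Since the complexification of a unitary bundle $V$ is $V\oplus\bar V$, the class $r_i$ is the degree-$4i$ part of $c(V)c(\bar V)$, which equals $2c_{2i}$ modulo decomposable elements; once $2$ is inverted each relation $r_i=0$ eliminates the even-indexed class $c_{2i}$, leaving
\[
H^*(\Sp/\U;\Z[1/2])=\Z[1/2][c_1,c_3,c_5,\dots],
\]
polynomial on the odd-indexed Chern classes with $|c_{2j-1}|=4j-2$. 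Dually $H_*(\Sp/\U;\Z[1/2])$ is free on a basis $\{b_J\}$ indexed by the non-decreasing finite odd sequences $J$, with $b_J$ in degree $2|J|$. As everything is even-dimensional and torsion-free away from~$2$, the Atiyah--Hirzebruch spectral sequence for $\MSp[1/2]_*(\Sp/\U_+)$ collapses with no extension problems, giving $\MSp[1/2]_*(\Sp/\U_+)\iso\MSp[1/2]_*\otimes_{\Z[1/2]}H_*(\Sp/\U;\Z[1/2])$, a free $\MSp[1/2]_*$-module on the $\{b_J\}$.

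To realise the splitting, recall that for any $\MSp[1/2]$-module $M$ one has $\Hom_{\MSp[1/2]}(\Sigma^{n}\MSp[1/2],M)=\pi_{n}M$. Thus each class $b_J\otimes1\in\pi_{2|J|}\bigl(\MSp[1/2]\Smash\Sigma^\infty(\Sp/\U_+)\bigr)$ determines a map of $\MSp[1/2]$-modules $\phi_J\:\Sigma^{2|J|}\MSp[1/2]\lra\MU[1/2]$, and wedging these gives
\[
\Phi\:\bigvee_{J}\Sigma^{2|J|}\MSp[1/2]\lra\MU[1/2].
\]
By construction $\Phi$ carries the evident homotopy basis of the source to the $\MSp[1/2]_*$-basis $\{b_J\otimes1\}$ of the target, so $\Phi$ is an isomorphism on homotopy groups and hence an equivalence, which is the asserted wedge decomposition.

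It remains to recognise the indexing set as the monomial basis of a polynomial algebra and to verify multiplicativity; this is where I expect the real work to lie. Here I would invoke Theorem~\ref{thm:2.3}: the idempotent $\epsilon_2$ exhibits $\MSp[1/2]\homeq\evMU$ as a retract of $\MU[1/2]$, so $\MSp_*[1/2]\lra\MU_*[1/2]$ is a split injection and hence split injective on indecomposables. A Poincar\'e-series count (using the classical fact that $\MSp_*[1/2]$ is polynomial on one generator in each degree divisible by~$4$, and $\MU_*[1/2]$ on one generator in each even degree) shows this split injection is an isomorphism in the degrees divisible by~$4$; choosing complementary polynomial generators $u_{(2j-1)}$ in the degrees $4j-2$ then exhibits
\[
\MU_*[1/2]=\MSp_*[1/2][u_{(1)},u_{(3)},u_{(5)},\dots]
\]
as a polynomial algebra over $\MSp_*[1/2]$, whose monomial basis $u_J=\prod_t u_{(2j_t-1)}$ is exactly indexed by the odd sequences $J$ with $|u_J|=2|J|$. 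The main obstacle is to ensure this algebraic monomial basis can be taken to agree with the geometric basis $\{b_J\otimes1\}$ used to build $\Phi$, equivalently that the ring structure transported from $\MU[1/2]$ is compatible with the $H$-space product on $\Sp/\U$; granting that, the wedge summands are indexed by the polynomial algebra and $\Phi$ is multiplicative, completing the proof.
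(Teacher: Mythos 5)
Your overall strategy is the paper's: combine the ring equivalence $\MU[1/2]\homeq\MSp[1/2]\Smash\Sigma^\infty(\Sp/\U_+)$ of Theorem~\ref{thm:2.5} with freeness of $\MSp[1/2]_*(\Sp/\U_+)$ over $\MSp[1/2]_*$ and split off one suspended copy of $\MSp[1/2]$ per basis element. The additive part of your argument is complete and correct: the computation $H^*(\Sp/\U;\Z[1/2])\iso\Z[1/2][c_1,c_3,c_5,\dots]$, the collapse of the Atiyah--Hirzebruch spectral sequence, and the construction of $\Phi$ from a homotopy basis all go through, and the Poincar\'e series $\prod_{j\geq1}(1-t^{4j-2})^{-1}$ matches the stated indexing by odd sequences $J$.

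The gap is in the multiplicative statement, and it sits exactly where you flag it. The basis $\{b_J\}$ from which you build $\Phi$ is the \emph{dual} basis to the monomial basis of $H^*(\Sp/\U;\Z[1/2])$, and a dual basis is not multiplicative for the Pontryagin product: since $c_1$ is primitive one gets $b_{(1)}^2=2b_{(1,1)}$, so your $\Phi$ does not carry the polynomial product on $\bigvee_J\Sigma^{2|J|}\MSp[1/2]$ to the multiplication of $\MU[1/2]$. The detour through coefficient rings does not close this: exhibiting $\MU_*[1/2]$ as polynomial over $\MSp_*[1/2]$ says nothing about whether your chosen homotopy basis of $\MSp[1/2]\Smash\Sp/\U_+$ is closed under products (and, as a side issue, deducing split injectivity of $\MSp_*[1/2]\to\MU_*[1/2]$ from the idempotent of Theorem~\ref{thm:2.3} is delicate, since the paper notes that $\epsilon_2$ does \emph{not} fix the image of $\MSp_*$ in $\MU[1/2]_*$; it is cleaner to get the retraction from Theorem~\ref{thm:2.5} via the collapse $\Sp/\U_+\to S^0$). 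The repair is to work in homology rather than cohomology: the Hopf algebra isomorphism $H_*(\B\Sp\times\Sp/\U;\Z[1/2])\iso H_*(\B\U;\Z[1/2])$ established before Theorem~\ref{thm:2.5} shows that $H_*(\Sp/\U;\Z[1/2])$ is itself a polynomial algebra under the $H$-space product, on generators $x_{2j-1}$ in degrees $4j-2$, whence $\MSp[1/2]_*(\Sp/\U_+)$ is a polynomial $\MSp[1/2]_*$-algebra --- this is precisely the fact the paper's one-line proof invokes. Building $\Phi$ from the monomial basis $u_J=\prod_t x_{2j_t-1}$ of that polynomial algebra makes multiplicativity automatic, because the product of two monomials is again a basis element and the ring structure on $\MSp[1/2]\Smash\Sp/\U_+$ is identified with that of $\MU[1/2]$ by Theorem~\ref{thm:2.5}.
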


We begin by constructing an idempotent map of ring spectra
$\epsilon_2\:\MU[1/2]\lra\MU[1/2]$ for which the cohomology
theory associated to the image is representable by a ring
spectrum equivalent to $\MSp[1/2]$. Our construction is
modelled on that of Quillen's idempotent for $\BP$, as
described by Adams~\cite{JFASHGH}.

Let $x=x^\MU\in\MU^2(\CP^\infty)\subset\MU[1/2]^2(\CP^\infty)$
denote the canonical orientation class of~\cite{JFASHGH} which
arises geometrically as the homotopy class of a map
$\CP^\infty\xrightarrow{\homeq}\MU(1)\lra\Sigma^2\MU$.
Let $[1/2]_\MU(X)\in\MU[1/2]_*[[X]]$ denote the series
characterized by
\[
[2]_\MU([1/2]_\MU(X))=X.
\]
The series
\[
\phi(X)=[1/2]_\MU\(F^\MU(X,[-1]_\MU(-X))\)\in\MU[1/2]_*[[X]]
\]
may be interpreted as a strict isomorphism $\phi\:F'\iso F$,
where $F'$ is the formal group law defined by
\[
F'(X,Y)=\phi\(F\(\phi^{-1}(X),\phi^{-1}(Y)\)\),
\]
We choose a new orientation $y\in\MU[1/2]^2(\CP^\infty)$ by
requiring it to satisfy
\[
x=[1/2]_\MU\(F^\MU(y,[-1]_\MU(-y))\)
\]
and noting that in $\MU[1/2]^*(\CP^\infty)$ we have
\[
y\equiv x\mod{(x^2)}.
\]
Then $y$ is an orientation whose associated formal group
law is $F'$.
\begin{defn}\label{defn:odd}
A formal group law $G$ for which $[-1]_G(X)=-X$ is said to
be \emph{odd}.
\end{defn}

The logarithm of $F'$ is
\begin{align*}
\log^{F'}(X)&=\frac{1}{2}\(\log^\MU(X)-\log^\MU(-X)\)
\\
            &=\sum_{n\geq0}m_{2n}X^{2n+1},
\end{align*}
where $\log^\MU(X)=\sum_{n\geq0}m_nX^{n+1}\in\MU_*\otimes\Q[[X]]$.
Clearly $\log^{F'}(X)$ is an odd function of $X$ and a straightforward
calculation shows that $[-1]_{F'}(X)=-X$, hence $F'$ is odd.

The universality of $\MU_*$ implies that there is a unique homomorphism
$\MU_*\lra\MU[1/2]_*$ which classifies $F'$ and gives rise to a
homomorphism $\epsilon_2\:\MU[1/2]_*\lra\MU[1/2]_*$. The latter
is an idempotent, since it extends to a visibly idempotent
homomorphism on $\MU_*\otimes\Q=\Q[m_n:n\geq1]$, given by
\[
m_n\longmapsto
\begin{cases}
0    &\text{if $n$ odd},\\
m_n  &\text{if $n$ even}.
\end{cases}
\]
Let $\evMU_*=\im\epsilon_2\subset\MU[1/2]_*$ and let $\evMU^*(\ )$
denote the associated complex oriented multiplicative cohomology
theory, which is a summand of complex cobordism with $2$ inverted,
$\MU[1/2]^*(\ )$.

Explicit polynomial generators for $\evMU_*$ can be given using
the method of I.~Kozma~\cite{Kozma}. For each prime~$\ell$, let
$\f_\ell$ and $\v_\ell$ denote the \emph{Frobenius} and
\emph{verschiebung} operators with respect to the formal group
law~$F$. Thus, using ${\sum}^F$ to denote formal group summation,
we have
\begin{align*}
\f_\ell(X)&={\sum_{\zeta^\ell=1}}^F(\zeta X^{1/\ell}),
\\
\v_\ell(X)&=X^{\ell},
\\
\intertext{and}
\v_\ell\f_\ell(X)&={\sum_{\zeta^\ell=1}}^F(\zeta X^{\ell})
\\
                 &={\sum_{k\geq1}}^F(\T_{\ell,k}X^{k\ell})
\end{align*}
for some $\T_{\ell,k}\in\MU_{2k\ell-2}$. In the ring
$\MU_*\otimes\Q$ these coefficients satisfy the identities
\[
\T_{\ell,k}=
\ell m_{k\ell-1}-\sum_{1<s\mid k}m_{s-1}\T_{\ell,k/s}^{\,s}.
\]
Using the well known criterion of Milnor (see~\cite[theorem, page~128]{Stong},
also~\cite[part~II, Theorem~7.9]{JFASHGH}) we obtain the following.
If $k=\ell^d$, then $\T_{\ell,\ell^d}$ is a polynomial generator
of $\MU_*$. If $n$ has two distinct prime factors $\ell_1$ and $\ell_2$,
then some integral linear combination of $\T_{\ell_1,n/\ell_1}$ and
$\T_{\ell_2,n/\ell_2}$ will be a polynomial generator. We will assume
that a choice of such an element $\X_{n-1}\in\MU_{2n-2}$ has been made
for each $n$, with $\X_{\ell^{d+1}-1}=\T_{\ell,\ell^d}$ for every prime
$\ell$. Then by induction we have
\[
\epsilon_2(\T_{\ell,k})=
\begin{cases}
0&\text{if $\ell k$ is even},\\
\T_{\ell,k}&\text{if $\ell k$ is odd}.
\end{cases}
\]
\begin{thm}\label{thm:2.1}
The elements $\X_{2m}$ form a set of polynomial generators
for $\evMU_*$ as a $\Z[1/2]$-algebra.
\end{thm}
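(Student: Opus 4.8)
The plan is to show that the idempotent ring endomorphism $\epsilon_2$ acts on the polynomial algebra $\MU[1/2]_*=\Z[1/2][\X_1,\X_2,\ldots]$, where $\X_j\in\MU_{2j}$ runs over the chosen polynomial generators ($j=n-1$), simply as the $\Z[1/2]$-algebra projection that fixes the even-index generators $\X_{2m}$ and annihilates the odd-index generators $\X_{2m-1}$. Granting this, $\evMU_*=\im\epsilon_2$ is visibly the polynomial subalgebra $\Z[1/2][\X_{2m}:m\geq1]$, which is exactly the assertion.

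First I would record the rational picture: since $\epsilon_2\otimes\Q$ sends $m_n\mapsto m_n$ for $n$ even and $m_n\mapsto0$ for $n$ odd, we have $\evMU_*\otimes\Q=\Q[m_{2k}:k\geq1]$. As $\deg m_{2k}=4k$, this is concentrated in degrees divisible by $4$; because $\evMU_*\subset\MU[1/2]_*$ is torsion-free, it follows that $\evMU_*$ vanishes in every degree $\not\equiv0\pmod4$.

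Next, for an even-index generator I claim $\epsilon_2(\X_{2m})=\X_{2m}$. Writing $n=2m+1$ (odd), every prime factor of $n$ is odd. If $n=\ell^{d+1}$ is a prime power then $\X_{2m}=\T_{\ell,\ell^d}$ with $\ell k=n$ odd, which the displayed formula for $\epsilon_2(\T_{\ell,k})$ fixes; if $n$ has two distinct odd prime factors then $\X_{2m}$ is an integral combination of $\T_{\ell_1,n/\ell_1}$ and $\T_{\ell_2,n/\ell_2}$, each again fixed since $\ell_i(n/\ell_i)=n$ is odd. For an odd-index generator, $\X_{2m-1}$ lies in degree $2(2m-1)=4m-2\not\equiv0\pmod4$, so $\epsilon_2(\X_{2m-1})\in\evMU_{4m-2}=0$ by the degree vanishing just established; hence $\epsilon_2(\X_{2m-1})=0$.

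Finally, since a ring endomorphism of a polynomial ring is determined by its values on generators, $\epsilon_2$ is precisely the projection $\Z[1/2][\X_j]\to\Z[1/2][\X_{2m}]$ that kills the $\X_{2m-1}$, and its image is the polynomial algebra on the $\X_{2m}$; this identifies $\evMU_*=\Z[1/2][\X_{2m}:m\geq1]$. The one point needing genuine care — the main obstacle — is the \emph{exact} fixing $\epsilon_2(\X_{2m})=\X_{2m}$ rather than equality merely modulo decomposables, which is why I would argue through the explicit $\T$-decomposition of the generators instead of relying on the leading-term computation alone.
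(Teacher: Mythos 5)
Your proof is correct and follows essentially the same route as the paper: the crucial point, that $\epsilon_2(\X_{2m})=\X_{2m}$ exactly and not merely modulo decomposables, is obtained just as in the text from the inductive formula $\epsilon_2(\T_{\ell,k})=\T_{\ell,k}$ for $\ell k$ odd together with the choice of $\X_{n-1}$ as an integral combination of the elements $\T_{\ell,n/\ell}$. Your substitute for the companion computation that $\epsilon_2(\X_{2m-1})=0$ --- namely that $\evMU_*$ is torsion-free and rationally equal to $\Q[m_{2k}:k\geq1]$, hence vanishes in degrees $\not\equiv0\pmod4$ --- is a pleasant minor shortcut, but the argument is otherwise the paper's own.
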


Suppose $(R,G)$ is a pair consisting of a $\Z[1/2]$-algebra
$R$ together with an odd formal group law $G$ on $R$ classified
by a ring homomorphism $\psi\:\MU[1/2]_*\lra R$. The pushed
forward power series
\[
\psi_*\phi(X)=[1/2]_G\(G(X,[-1]_G(-X))\)\in R[[X]]
\]
has the associated formal group law
\[
G'(X,Y)=\psi_*\phi\(G\(\psi_*\phi^{-1}(X),\psi_*\phi^{-1}(Y)\)\),
\]
classified by the homomorphism $\psi\o\epsilon_2$. But since $G$
is odd, $\psi_*\phi(X)=X$, hence $G'=G$ and so $\psi\o\epsilon_2=\psi$.
Hence $\psi$ has a unique factorization
\[
\psi\:\MU[1/2]_*\xrightarrow{\epsilon_2}\evMU_*\lra R.
\]
We may now easily deduce
\begin{thm}
\label{thm:2.2}
The pair $(\evMU_*,F')$ is universal for pairs $(R,G)$ consisting
of a $\Z[1/2]$-algebra $R$ together with an odd formal group
law $G$ on $R$.
\end{thm}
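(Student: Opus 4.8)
The plan is to deduce this universal property almost formally from the universality of $\MU[1/2]_*$ for all formal group laws over $\Z[1/2]$-algebras, together with the factorization $\psi\o\epsilon_2=\psi$ established in the paragraph immediately preceding the statement. First I would record that $F'$ is a legitimate odd formal group law over $\evMU_*$ itself: its coefficients lie in $\im\epsilon_2=\evMU_*$ because $F'$ is by construction the pushforward of the universal law $F$ along $\epsilon_2$, and its oddness is precisely the identity $[-1]_{F'}(X)=-X$ extracted above from the logarithm $\log^{F'}(X)=\sum_{n}m_{2n}X^{2n+1}$.

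For the existence half, given a $\Z[1/2]$-algebra $R$ with an odd law $G$, I would classify $G$ by the unique homomorphism $\psi\:\MU[1/2]_*\lra R$, then set $\theta=\psi\o\iota$, where $\iota\:\evMU_*\hookrightarrow\MU[1/2]_*$ is the inclusion. Writing the idempotent as $\epsilon_2=\iota\o\pi$ with $\pi\:\MU[1/2]_*\twoheadrightarrow\evMU_*$ the induced surjection onto its image, one has $\pi_*F=F'$ (using injectivity of $\iota$ and $\iota_*\pi_*F=(\epsilon_2)_*F=F'$), and hence $\iota_*F'=(\epsilon_2)_*F$. The already-established identity $\psi\o\epsilon_2=\psi$ then gives
\[
\theta_*F'=\psi_*\iota_*F'=(\psi\o\epsilon_2)_*F=\psi_*F=G,
\]
so $\theta$ classifies $G$ as required.

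For uniqueness, suppose $\theta'\:\evMU_*\lra R$ also satisfies $\theta'_*F'=G$. Then $\theta'\o\pi\:\MU[1/2]_*\lra R$ pushes $F$ to $\theta'_*\pi_*F=\theta'_*F'=G$, so by the universality of $\MU[1/2]_*$ it must coincide with $\psi=\theta\o\pi$ (the last equality since $\theta\o\pi=\psi\o\iota\o\pi=\psi\o\epsilon_2=\psi$). Idempotency of $\epsilon_2$ forces $\pi\o\iota=\id$, so $\pi$ is surjective and we may cancel it to conclude $\theta'=\theta$.

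I expect no serious obstacle: all the analytic content has already been absorbed into the computation $\psi_*\phi(X)=X$ for odd $G$, which is exactly what makes $\psi\o\epsilon_2=\psi$ work. The only point demanding care is the bookkeeping of the splitting $\epsilon_2=\iota\o\pi$ through its image and the verification that $\pi$ carries the universal law $F$ precisely to $F'$ as a law over $\evMU_*$; once that factorization is fixed, everything reduces to formal manipulation of classifying maps.
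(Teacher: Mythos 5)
Your proposal is correct and follows essentially the same route as the paper: the entire content is the identity $\psi\o\epsilon_2=\psi$ for a classifying map $\psi$ of an odd law (coming from $\psi_*\phi(X)=X$), which the paper states immediately before the theorem and from which it deduces the unique factorization through $\evMU_*=\im\epsilon_2$. Your explicit splitting $\epsilon_2=\iota\o\pi$ and the existence/uniqueness bookkeeping merely spell out what the paper leaves as ``we may now easily deduce.''
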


The theory $\evMU^*(\ )$ possesses canonical orientations for
symplectic bundles, which can be regarded as complex bundles.
In particular, the canonical quaternionic line bundle
$\xi_1\lra\HP^\infty$ has an orientation $w\in\evMU^4(\HP^\infty)$,
and under the pullback induced from the canonical map
$\CP^\infty\lra\HP^\infty$, this maps to
\[
y(-[-1]_{F'}y)=y^2\in\evMU^4(\CP^\infty).
\]
For any commutative ring $R$, the image of the homomorphism
$x_*\:H_*(\CP^\infty;R)\lra H_*(\MU;R)$ induced by the orientation~$x$
contains a set of algebra generators for $H_*(\MU;R)$. Similarly,
the map
\[
y\:\CP^\infty\xrightarrow{\homeq}\MU(1)\xrightarrow{x}\Sigma^2\MU[1/2]
\lra\Sigma^2\,\evMU[1/2]\lra\Sigma^2\MU[1/2]
\]
induces a map in homology which has generators in its image.
Now it follows that the composite
\[
\CP^\infty\xrightarrow{\homeq}\MU(1)
\xrightarrow{y}\Sigma^2\;\evMU[1/2]
\lra\Sigma^2\MU[1/2]
\]
also provides algebra generators for
$H_*(\evMU;\Z[1/2])\subset H_*(\MU;\Z[1/2])$. Then it follows that
the map $w\:\HP^\infty\homeq\MSp(1)\lra\Sigma^4\MSp\lra\evMU$ induces
a map in homology whose image contains polynomial generators for
$H_*(\evMU;\Z[1/2])$, since the natural map $\CP^\infty\lra\HP^\infty$
induces a surjection in ordinary homology. Finally, we see that the
orientation map $\MSp\lra\evMU$ induces an isomorphism in homology,
since $H_*(\MSp;\Z[1/2])$ is generated by the image of the natural
homomorphism $H_*(\HP^\infty;\Z[1/2])\lra H_*(\MSp;\Z[1/2])$ and we
know that $H_*(\MSp;\Z[1/2])$ maps surjectively to $H_*(\evMU;\Z[1/2])$.

Recall the existence of the fibrations
\[
\Sp(n)/\U(n)\lra\B\U(n)\lra\B\Sp(n),
\]
induced from the `quaternionification map' $\U(n)\lra\Sp(n)$.
For each $m$, there is a map
\[
\B\U(m)\times\Sp(n)/\U(n)\lra\B\U(m+n)
\]
constructed using the external Whitney sum map. Hence there
is a family of maps compatible under increasing values of $n$
and $k$,
\[
\B\Sp(k)\times\Sp(n)/\U(n)\lra\B\U(2k)\times\Sp(n)/\U(n)\lra\B\U(2k+n).
\]
A straightforward calculation now shows that on passing to
the limit with respect to $n$ and $k$, there is a map of H-spaces
$\B\Sp\times\Sp/\U\lra\B\U$ which induces an isomorphism of Hopf
algebras over $\Z[1/2]$,
\[
H_*(\B\Sp\times\Sp/\U;\Z[1/2])\lra H_*(\B\U;\Z[1/2]).
\]

We now pass to Thom spectra. Let $\chi\lra\Sp/\U$ denote the
pullback of the universal (virtual) bundle $\zeta\lra\B\U$. The
Thom complex $\M\chi$ of $\zeta$ is a ring spectrum and possesses
a canonical orientation $\M\chi\lra\MU$. Smashing with $\MSp$ we
obtain a morphism of ring spectra $\MSp\Smash\M\chi\lra\MU$ and
a commutative diagram of $\Z[1/2]$-algebras,
\[
\begin{CD}
H_*(\B\Sp\times\Sp/\U;\Z[1/2])@>\iso>>H_*(\B\U;\Z[1/2]) \\
@V\iso VV     @V\iso VV \\
H_*(\MSp\Smash\M\chi;\Z[1/2])@>>> H_*(\MU;\Z[1/2])
\end{CD}
\]
in which the vertical maps are Thom isomorphisms in $H_*(\ ;\Z[1/2])$,
and hence the bottom row is an isomorphism.

We would like to replace $\MSp\Smash\M\chi$ up to homotopy
by $\MSp\Smash\Sigma^\infty(\Sp/\U_+)$, at least after inverting 2.
We are able to do this using the following result from~\cite{JNMM}
provides a geometric realization of the Thom isomorphism.
\begin{lem}
\label{lem:2.4}
Let $\xi\lra B$ be a virtual bundle, orientable in the cohomology
theory represented by the ring spectrum $E$, and let
$u\:\M\xi\lra E$ be an orientation. Then there is an equivalence
\[
E\Smash\M\xi\homeq E\Smash\Sigma^\infty(B_+).
\]
\end{lem}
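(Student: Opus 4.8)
The plan is to write down an explicit map realizing the Thom isomorphism and then check that it is an equivalence by a homotopy-group computation. First I would produce the \emph{Thom diagonal}. The diagonal $B\lra B\times B$ is covered by a bundle map identifying $\xi$ with the restriction along the diagonal of the external sum of $\xi$ with the zero bundle; passing to Thom spectra, this yields a natural stable map
\[
\Delta\:\M\xi\lra\M\xi\Smash\Sigma^\infty(B_+).
\]
For a genuinely virtual $\xi$ the same construction is carried out at the level of spectra, reducing to an honest vector bundle by adding a complement and then desuspending.

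Using the orientation $u\:\M\xi\lra E$ together with the product $\mu\:E\Smash E\lra E$, I would then form the composite
\[
\Phi\:E\Smash\M\xi
\xrightarrow{1\Smash\Delta}E\Smash\M\xi\Smash\Sigma^\infty(B_+)
\xrightarrow{1\Smash u\Smash1}E\Smash E\Smash\Sigma^\infty(B_+)
\xrightarrow{\mu\Smash1}E\Smash\Sigma^\infty(B_+),
\]
which is a map of left $E$-module spectra. Since a map of spectra is an equivalence exactly when it induces an isomorphism on all homotopy groups, it suffices to analyse $\Phi$ on $\pi_*$. Here $\pi_*(E\Smash\M\xi)=E_*(\M\xi)$ and $\pi_*(E\Smash\Sigma^\infty(B_+))=E_*(B_+)$, and unwinding the definition of $\Delta$ shows that $\Phi_*$ is precisely the classical Thom isomorphism given by cap product with the Thom class $u\in E^0(\M\xi)$. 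Because $u$ is an orientation — its restriction to a fibre is a generator — this map is an isomorphism; hence $\Phi$ is a $\pi_*$-isomorphism and therefore the desired equivalence.

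The step I expect to be the main obstacle is the identification of $\Phi_*$ with the algebraically defined Thom isomorphism: one must unwind the Thom diagonal and match the resulting cap-product conventions, and verify naturality carefully enough to invoke orientability in the form that makes the fibrewise restriction of $u$ a generator. The construction of $\Delta$ for a truly virtual bundle also demands some care, since $\M\xi$ is then only a spectrum rather than a space; this is handled by stabilising the bundle-level construction. Once this bookkeeping is in place the equivalence follows formally from orientability, which is why we are content to quote the result from~\cite{JNMM}.
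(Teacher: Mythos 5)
Your proposal is correct and follows essentially the same route the paper takes: the paper quotes the lemma from Mahowald--Ray but then displays exactly your composite (Thom diagonal, then the orientation $u$, then the multiplication $\mu$) as the equivalence, and the underlying justification is the same identification of the induced map on $\pi_*$ with the Thom isomorphism in $E$-homology. The only point you leave implicit --- that cap product with an orientation class is an isomorphism, which requires the usual skeletal induction or Atiyah--Hirzebruch argument --- is precisely what the cited reference supplies, so nothing essential is missing.
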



The equivalence of Theorem \ref{thm:2.5} is the composite
\[
E\Smash\M\xi\xrightarrow{1\Smash\delta\phantom{\Smash1}}
E\Smash\M\xi\Smash\Sigma^\infty(B_+)
\xrightarrow{1\Smash u\Smash1}E\Smash E\Smash\Sigma^\infty(B_+)
\xrightarrow{\phantom{1\Smash}\mu\Smash1}E\Smash\Sigma^\infty(B_+),
\]
where $u\:\M\xi\lra E$ is the chosen orientation for $\xi$,
$\delta\:\M\xi\lra\M\xi\Smash\Sigma^\infty(B_+)$ is the
external diagonal used in defining the Thom isomorphism,
and $\mu\:E\Smash E\lra E$ is the multiplication map.

Theorem \ref{thm:2.6} now follows by a standard argument
from the fact that
\[
\MSp[1/2]_*(\Sigma^\infty(\Sp/\U_+))
\iso\evMU_*(\Sigma^\infty(\Sp/\U_+))
\]
is a polynomial algebra over the ring $\MSp[1/2]_*\iso\evMU_*$,
hence is a free module.

\section{Categories of odd formal group laws}
\label{sec:3}

In this section, let $R=R_*$ be a commutative unital graded
$\Z[1/2]$-algebra. We will also denote the category of commutative,
1-dimensional formal group laws over $R$ by $\FGL(R)$, and the full
category of odd formal group laws by $\evFGL(R)$; here the morphisms
are strict isomorphisms. Thus a morphism $F_1\xrightarrow{\phi}F_2$
between two formal group laws is a power series~$\phi(X)\in R[[X]]$
such that
\begin{align*}
\phi(X)&\equiv X\mod~{(X)^2}
\\
\intertext{and}
\phi(F_1(X,Y))&=F_2(\phi(X),\phi(Y)).
\end{align*}
These small categories are groupoids, and are representable covariant
functors of the algebra~$R$. There are natural isomorphisms
\begin{align}
\FGL(R)&\iso\Alg_{\Z[1/2]}(\MU[1/2]_*\MU[1/2],R),
\label{eqn:3.1}\\
\evFGL(R)&\iso\Alg_{\Z[1/2]}(\evMU_*\evMU,R),
\label{eqn:3.2}
\end{align}
where we define
\[
\evMU_*\evMU=
\evMU_*\oTimes{\MU[1/2]_*}\MU[1/2]_*\MU[1/2]\oTimes{\MU[1/2]_*}\evMU_*,
\]
making use of the fact that $\MU[1/2]_*\MU[1/2]$ is a bimodule over
$\MU[1/2]_*$ and the idempotent $\epsilon_2$ provides $\evMU_*$ with
the structure of a module over $\MU[1/2]_*$. The topological structure
of $\MU_*\MU$ and $\evMU_*\evMU$ includes that of `bilateral Hopf
algebras' or `Hopf algebroids', and the associated structure maps
yield (natural with respect to the algebra $R$) groupoid structures
on the right hand sides of Equations \eqref{eqn:3.1} and \eqref{eqn:3.2},
and these isomorphisms are isomorphisms of groupoids. It also follows
from standard results in this area~\cite{HRMDCR,DCRBk} that the
idempotent $\epsilon_2$ induces an equivalence of groupoids
\[
\boldsymbol\epsilon_{\mathbf2}\:\FGL(R)\lra\evFGL(R),
\]
which is functorial in $R$. The fixed objects are precisely the odd
formal group laws, while the
fixed morphisms are the strict isomorphisms between odd formal
group laws which are odd power series over $R$. The full subcategory
of all such objects and morphisms is the image of
$\boldsymbol\epsilon_{\mathbf2}$.

V.~M.~Buchstaber~\cite{VMB} introduced a different idempotent
$e_2$ on $\MU[1/2]$ with associated idempotent equivalence
\[
\mathbf e_{\mathbf2}\:\FGL(R)\lra\evFGL(R).
\]
This is defined as follows. Associated to a formal group law
$F$ on $R$ there is a strict isomorphism
\[
\theta_F\:F\lra\mathbf e_{\mathbf2}(F)
\]
given by the series
\[
\theta_F(X)=X\sqrt{\dfrac{-[-1]_F(X)}{X}},
\]
where for any series $h(X)\in R[[X]]$ satisfying $h(X)\equiv1\mod{(X)}$,
we define
\[
\sqrt{h(X)}\equiv1\mod{(X)}
\]
to be the unique square root of $h(X)$ with this property (this of course
depends crucially on the fact that $R$ is an algebra over $\Z[1/2]$);
this series can also be determined using the formal binomial expansion
for $h(X)^{1/2}$. It is now straightforward to verify the following result.
\begin{prop}
\label{prop:3.3}
The idempotents $\epsilon_2$ and $e_2$ satisfy the following.
\begin{enumerate}
\item[a)]
For any formal group law over $R$, $\mathbf e_{\mathbf 2}(F)$ is odd.
\item[b)]
For any strict isomorphism $\phi\:F_1\lra F_2$,
$\mathbf e_{\mathbf 2}(\phi)$ is odd.
\item[c)]
There are equations
\begin{align*}
\mathbf  e_{\mathbf 2}\boldsymbol\epsilon_{\mathbf 2}
&=\mathbf e_{\mathbf 2},
\\
\boldsymbol\epsilon_{\mathbf 2}\mathbf e_{\mathbf 2}
&=\boldsymbol\epsilon_{\mathbf 2}.
\end{align*}
\item[d)]
The restrictions of the idempotents $\epsilon_2$ and $e_2$
induce inverse isomorphisms of rings
\[
\im e_2
\begin{matrix}
\xrightarrow{\ds\epsilon_2}\\[-7pt]
\xleftarrow[\ds e_2]{}
\end{matrix}
\im\epsilon_2.
\]
\end{enumerate}
\end{prop}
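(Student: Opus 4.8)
The plan is to verify each of the four assertions (a)--(d) by direct computation with the two explicitly-defined idempotents. I would work throughout with the strict isomorphism $\theta_F(X)=X\sqrt{-[-1]_F(X)/X}$ that defines $\mathbf e_{\mathbf 2}$, and the series $\phi(X)=[1/2]_\MU\bigl(F^\MU(X,[-1]_\MU(-X))\bigr)$ that defines $\boldsymbol\epsilon_{\mathbf 2}$ through the classification of $F'$.

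For part (a), I would compute the $[-1]$-series of $\mathbf e_{\mathbf 2}(F)$ and check it equals $-X$. Since $\mathbf e_{\mathbf 2}(F)$ is defined by pushing $F$ forward along $\theta_F$, the key identity is that $\theta_F$ carries the series $P^F(X)=-X\,[-1]_F(X)$ to $X^2$ (indeed $\theta_F(X)^2=-X\,[-1]_F(X)=P^F(X)$ by construction). From Definition~\ref{defn:odd} and the discussion there, a formal group law is odd exactly when its logarithm is an odd power series; so I would either check directly that $\log^{\mathbf e_{\mathbf 2}(F)}(X)=\log^F(\theta_F^{-1}(X))$ is odd, or verify the $[-1]$-series condition by a symmetry argument, using that $\theta_F$ intertwines the involution $\alpha\colon f(X)\mapsto f([-1]_F X)$ from Section~\ref{sec:1} with the sign involution $X\mapsto-X$. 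Part (b) is then formal: given $\phi\colon F_1\to F_2$, the morphism $\mathbf e_{\mathbf 2}(\phi)$ is the conjugate $\theta_{F_2}\circ\phi\circ\theta_{F_1}^{-1}$, and I would show this composite is an odd power series by checking it commutes with $X\mapsto -X$, which reduces to the fact that $\phi$ intertwines the involutions $\alpha$ for $F_1$ and $F_2$.

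For part (c), the two equations are statements about composing the two idempotent-induced functors. Since $\boldsymbol\epsilon_{\mathbf 2}$ fixes precisely the odd formal group laws (as noted just above the proposition) and part (a) shows $\mathbf e_{\mathbf 2}(F)$ is always odd, we get $\boldsymbol\epsilon_{\mathbf 2}\mathbf e_{\mathbf 2}(F)=\mathbf e_{\mathbf 2}(F)$; combined with the analogous statement on morphisms from the odd-ness in part (b), this gives $\boldsymbol\epsilon_{\mathbf 2}\mathbf e_{\mathbf 2}=\boldsymbol\epsilon_{\mathbf 2}$. For the other equation, I would use that $\mathbf e_{\mathbf 2}$ depends on $F$ only through $[-1]_F$, which is unchanged under applying $\boldsymbol\epsilon_{\mathbf 2}$ up to the appropriate strict isomorphism; more carefully, I expect the cleanest route is to evaluate both sides on an arbitrary odd $F$ (where both idempotents act as the identity by part (a) and the universality in Theorem~\ref{thm:2.2}) and then propagate along the strict isomorphisms.

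Part (d) is where the real content lies, and it is the step I expect to be the main obstacle. The claim is that $\epsilon_2$ and $e_2$, restricted to the respective images $\im e_2$ and $\im\epsilon_2=\evMU_*$, are mutually inverse ring isomorphisms. The natural isomorphisms \eqref{eqn:3.1} and \eqref{eqn:3.2} translate the two functorial equations of part (c) into statements about the corepresenting rings, but one must be careful that the functor-level identities $\mathbf e_{\mathbf 2}\boldsymbol\epsilon_{\mathbf 2}=\mathbf e_{\mathbf 2}$ and $\boldsymbol\epsilon_{\mathbf 2}\mathbf e_{\mathbf 2}=\boldsymbol\epsilon_{\mathbf 2}$ dualize to $e_2\epsilon_2=e_2$ and $\epsilon_2 e_2=\epsilon_2$ on the classifying rings, which is not yet mutual invertibility. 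The point is that both $e_2$ and $\epsilon_2$ have the \emph{same} image up to the identification of $\evMU_*$ with the universal odd ring, so the plan is to show that restricting $e_2$ to $\im\epsilon_2$ and $\epsilon_2$ to $\im e_2$ yields maps whose composites, computed via the relations just derived, are the respective identities. Concretely, on $\im\epsilon_2=\evMU_*$ the formal group law $F'$ is already odd, so $e_2$ acts trivially there and $\epsilon_2\circ e_2|_{\im\epsilon_2}=\mathrm{id}$; symmetrically, on $\im e_2$ the group law is again odd, giving $e_2\circ\epsilon_2|_{\im e_2}=\mathrm{id}$. I would verify both identities by applying them to the universal odd formal group law and invoking Theorem~\ref{thm:2.2} to conclude equality of the classifying homomorphisms.
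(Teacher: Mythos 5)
The paper gives no argument for this proposition beyond the remark that it is ``straightforward to verify'', so your plan of direct verification is the intended route, and your treatments of (a), (b) and (d) are essentially sound. For (a) the key identity is $\theta_F([-1]_F(X))=-\theta_F(X)$: both sides square to $P^F(X)=-X\,[-1]_F(X)$ (which is invariant under $X\mapsto[-1]_F(X)$) and both are $\equiv-X\bmod{(X^2)}$, so they agree because their sum is $X$ times a unit of $R[[X]]$ --- this is where $1/2\in R$ enters; it follows that $[-1]_{\mathbf e_{\mathbf2}(F)}(Y)=\theta_F([-1]_F(\theta_F^{-1}(Y)))=-Y$. (Your alternative criterion ``odd law $\Leftrightarrow$ odd logarithm'' only makes sense over torsion-free rings, so stick with the $[-1]$-series.) Part (b) then follows since any strict isomorphism of odd laws commutes with the series $-X$, and (d) follows from (c) together with idempotency exactly as you outline.

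The gap is in (c). Your computation ``$\boldsymbol\epsilon_{\mathbf2}\mathbf e_{\mathbf2}(F)=\mathbf e_{\mathbf2}(F)$'' (valid, because $\mathbf e_{\mathbf2}(F)$ is odd and $\boldsymbol\epsilon_{\mathbf2}$ fixes odd laws and odd isomorphisms) establishes $\boldsymbol\epsilon_{\mathbf2}\mathbf e_{\mathbf2}=\mathbf e_{\mathbf2}$, not $\boldsymbol\epsilon_{\mathbf2}\mathbf e_{\mathbf2}=\boldsymbol\epsilon_{\mathbf2}$ as you conclude. These conclusions genuinely differ: $\mathbf e_{\mathbf2}(F)\neq\boldsymbol\epsilon_{\mathbf2}(F)$ in general --- for the multiplicative law the two logarithms begin $X-\tfrac1{24}X^3+\cdots$ and $X+\tfrac13X^3+\cdots$. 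Which of the paper's two displayed equations your computation proves depends on the (diagrammatic versus right-to-left) composition convention, but either way you must prove both, and your sketch for the remaining one --- evaluate on an arbitrary \emph{odd} $F$ and ``propagate'' --- is not a proof: an identity of endofunctors of $\FGL(R)$ cannot be checked on odd laws alone, and part (a) does not say that $\mathbf e_{\mathbf2}$ \emph{fixes} odd laws, only that its values are odd. The missing ingredient is the one-line observation that $\theta_G(X)=X\sqrt{-(-X)/X}=X$ whenever $[-1]_G(X)=-X$, so that $\mathbf e_{\mathbf2}$ is the identity on odd laws and on the (automatically odd) strict isomorphisms between them. Combined with the fact that $\boldsymbol\epsilon_{\mathbf2}(F)$ is always odd, this gives the second relation $\mathbf e_{\mathbf2}\boldsymbol\epsilon_{\mathbf2}(F)=\boldsymbol\epsilon_{\mathbf2}(F)$ for every $F$; the two relations, read contravariantly on classifying rings as $e_2\circ\epsilon_2=e_2$ and $\epsilon_2\circ e_2=\epsilon_2$, are then exactly what your argument for (d) consumes.
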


It is worth noting that Buchstaber's idempotent
$e_2\:\MU[1/2]_*\lra\MU[1/2]_*$ fixes the image of the canonical
map $\MSp_*\lra\MU[1/2]_*$. Indeed there is a diagram of ring
spectra
\[
\begin{CD}
\MSp[1/2]@>>>\MU[1/2] \\
@V\id VV  @V{e_2}VV \\
\MSp[1/2]@>>>\MU[1/2]
\end{CD}
\]
from which we can deduce that the image of the idempotent
$e_2\:\MU[1/2]^*(\ )\lra\MU[1/2]^*(\ )$ is a cohomology theory
which agrees with the image of the associated natural transformation
of cohomology theories $\MSp[1/2]^*(\ )\lra\MU[1/2]^*(\ )$. However,
the same is \emph{not} true for our idempotent $\epsilon_2$.

From~\cite{Swit} we obtain the following facts about the spectrum
$\MSp$, which do not use any detailed knowledge of the structure
of the still mysterious ceofficient ring $\MSp_*$. The object
\[
\MSp_*\MSp=\MSp_*(\MSp)=\MSp_*[Q_n:n\geq1]
\]
is endowed with the structure of a Hopf algebroid, where
the elements $Q_n\in\MSp_{4n}\MSp$ may be chosen so that
the coproduct on the $Q_n$ is given by the composition law
\begin{align*}
Q(T)&\longmapsto(1\otimes Q)(Q\otimes1(T)),
\\
\intertext{where}
Q(T)&=\sum_{n\geq0}Q_nT^{n+1},
\\
(1\otimes Q)(T)&=\sum_{n\geq0}1\otimes Q_nT^{n+1},
\\
(Q\otimes1)(T)&=\sum_{n\geq0}Q_n\otimes1\,T^{n+1}.
\end{align*}
Furthermore, the natural ring homomorphism $\MSp_*\MSp\lra\MU_*\MU$
is a morphism of Hopf algebroids over $\Z$. After inverting 2 and
applying the idempotent $\epsilon_2$ on the factors of $\MU[1/2]$,
we obtain the following result.
\begin{thm}\label{thm:3.4}
The evident natural homomorphism $\MSp[1/2]_*\MSp[1/2]\lra\evMU_*\evMU$
is an isomorphism of Hopf algebroids over $\Z[1/2]$.
\end{thm}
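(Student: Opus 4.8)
The plan is to deduce the isomorphism of Hopf algebroids from the equivalences already established in Section~\ref{sec:2}, combined with a homological calculation mirroring the proof that $\MSp[1/2]\lra\evMU$ is an equivalence. First I would recall that by Theorem~\ref{thm:2.3} the orientation $\MSp[1/2]\lra\evMU$ is an equivalence of ring spectra, so the natural ring homomorphism $\MSp[1/2]_*\MSp[1/2]\lra\evMU_*\evMU$ is at least well-defined and compatible with the Hopf algebroid structure maps; indeed the comparison map is simply $(\MSp[1/2]\Smash\MSp[1/2])_*\lra(\evMU\Smash\evMU)_*$ induced by smashing the equivalence with itself. Granting that both sides carry the Hopf algebroid structure coming from the diagonal on the sphere-level, the map is automatically a morphism of Hopf algebroids over $\Z[1/2]$, so the entire content is that it is a bijection on homotopy groups.

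The key step is therefore to verify that $\MSp[1/2]\Smash\MSp[1/2]\lra\evMU\Smash\evMU$ is an equivalence. The cleanest route is to observe that an equivalence of ring spectra $f\:E\lra E'$ induces an equivalence $f\Smash f\:E\Smash E\lra E'\Smash E'$, since smashing preserves weak equivalences after inverting~$2$ where everything in sight is a wedge of spheres and the relevant homotopy groups are finitely generated free $\Z[1/2]$-modules in each degree. Concretely, I would run the argument through ordinary homology exactly as in the proof of Theorem~\ref{thm:2.3}: the Thom-isomorphism computation there shows that the orientation $\MSp\lra\evMU$ induces an isomorphism $H_*(\MSp;\Z[1/2])\iso H_*(\evMU;\Z[1/2])$, and by the Künneth theorem this gives an isomorphism on $H_*(\ \Smash\ ;\Z[1/2])$; then a standard Atiyah--Hirzebruch or direct comparison argument upgrades this to the claimed isomorphism on $\MSp[1/2]_*\MSp[1/2]\lra\evMU_*\evMU$.

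It then remains to identify $\evMU_*\evMU$ with the algebraically defined object appearing in the statement, namely
\[
\evMU_*\evMU=
\evMU_*\oTimes{\MU[1/2]_*}\MU[1/2]_*\MU[1/2]\oTimes{\MU[1/2]_*}\evMU_*.
\]
Here I would invoke the interpretation of $\evMU_*\evMU$ as corepresenting the groupoid $\evFGL(R)$ of \emph{odd} formal group laws, furnished by the equivalence of groupoids $\boldsymbol\epsilon_{\mathbf2}$ and the isomorphism~\eqref{eqn:3.2}; the Hopf algebroid structure maps then correspond to source, target, identity, inverse and composition of strict isomorphisms between odd formal group laws, and these match the topologically-defined structure on $\MSp[1/2]_*\MSp[1/2]$ under the comparison map. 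The main obstacle I anticipate is bookkeeping rather than conceptual: one must check that the idempotent $\epsilon_2$ applied on both smash factors is compatible with the left and right unit maps and the coproduct, so that the flat base-change formula used to define $\evMU_*\evMU$ genuinely produces a sub-Hopf-algebroid on which the comparison restricts to an isomorphism. Since $\MU[1/2]_*$ is torsion-free and $\MU[1/2]_*\MU[1/2]$ is $\MU[1/2]_*$-flat, these tensor products behave well and the verification reduces to tracking $\epsilon_2$ through the structure maps.
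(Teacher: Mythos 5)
Your proposal is correct and follows essentially the same route as the paper: both reduce the statement to the ring-spectrum equivalence $\MSp[1/2]\simeq\evMU$ of Theorem~\ref{thm:2.3} (the paper phrases this as starting from the Hopf algebroid morphism $\MSp_*\MSp\lra\MU_*\MU$, inverting~$2$ and applying $\epsilon_2$ to both factors, which amounts to your smash-squared equivalence together with the identification of $\evMU_*\evMU$ as the corresponding retract of $\MU[1/2]_*\MU[1/2]$). The one slip is the parenthetical claim that everything is ``a wedge of spheres'' after inverting~$2$ --- it is not --- but this is harmless since smashing with a fixed spectrum preserves stable equivalences unconditionally, and your homology/K\"unneth fallback also works because both spectra have free $\Z[1/2]$-homology.
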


Notice that the natural orientation $p^\MSp\in\MSp^4(\HP^\infty)$
maps to an orientation $\pev\in\evMU^4(\HP^\infty)$, and on pulling
back to $\CP^\infty$, becomes the square of the complex orientation
$y\in\evMU^2(\CP^\infty)$.

The morphism of Hopf algebroids $\MU[1/2]_*\MU[1/2]\lra\evMU_*\evMU$
induced by the idempotent~$\epsilon_2$ is an equivalence, in the
sense that there is a commutative diagram of groupoids which is
natural in $R$
\[
\begin{CD}
\Alg_{\Z[1/2]}(\MU[1/2]_*\MU[1/2],R)@>>>\Alg_{\Z[1/2]}(\evMU_*\evMU,R) \\
@V\iso VV                                @V\iso VV   \\
\FGL(R)@>>>\evFGL(R)
\end{CD}
\]
and in which the rows are equivalences of groupoids induced by
$\epsilon_2$. For example, this implies that the associated Ext
groups arising as the Adams-Novikov $\mathrm{E}_2$-terms for
the homology theories defined by $\MU[1/2]$ and $\MSp[1/2]$ are
naturally isomorphic.

There is a well known decomposition of $\Z$-algebras,
\[
\MU_*\MU=\MU_*\oTimes{\Z}\mathrm S_*,
\]
where $\mathrm S_*=\Z[B_n:n\geq1]$ with $B_n$ the usual generator
of $\MU_*\MU$ over $\MU_*$. This splitting is such that the subalgebra
$\MU_*$ is invariant with respect to the coaction map over $\mathrm S_*$.
Dually, we have a semi-tensor decomposition of $\Z$-algebras,
\[
\MU^*\MU=\MU_*\oTimes{\Z}\mathrm S^*,
\]
where $\mathrm S^*=\Hom_\Z(\mathrm S_*,\Z)$. Here the subalgebra
$\MU_*$ is invariant under the action of $\mathrm S^*$. The idempotent
${\epsilon_2}_*\epsilon_2$, obtained by applying $\epsilon_2$ to
both the factors of $\MU$ in $\MU_*\MU$, has image
\[
{\epsilon_2}_*\epsilon_2(\mathrm S_*)=\Z[1/2][B_{2n}:n\geq1]
=\evS_*
\]
when restricted to $\mathrm S_*[1/2]$.

These decompositions are compatible with the action of the
idempotent $\epsilon_2$, and there is an isomorphism
\[
\MSp[1/2]^*\MSp[1/2]\iso\evMU_*\otimes\evS^*,
\]
where $\evS^*=\Hom_{\Z[1/2]}(\evS_*,\Z[1/2])$. There is a well
known semi-tensor product decomposition
\[
\MSp^*\MSp\iso\MSp_*\oTimes{\Z}\mathrm Q^*,
\]
where is dual to $\mathrm Q_*=\Z[Q_n:n\geq1]$; similarly, the
$\Z[1/2]$-Hopf algebra $\evS^*$ is isomorphic to the dual of
$\Z[1/2][Q_n:n\geq1]\subset\MSp[1/2]_*\MSp[1/2]$.

The work of \cite{BS,Mor} suggests that we should be able to
relate the algebra $\evS^*$ to the Virasoro algebra. In~\cite{Mor},
the operations $s_{e_n}\in\MU^{2n}\MU$ of~\cite{JFASHGH} are shown
to realize the action of the operators $v_k=z^{k+1}\d/\d z$ in the
rational Lie algebra $\mathrm V^+$ with basis $\{v_k:k\geq1\}$; it
follows that the operations $\epsilon_2\o s_{e_n}\o\epsilon_2$
realize the action of the even half of $\mathrm V^{+}$, namely the
subalgebra $\mathrm V^{2+}$ with basis $\{v_{2k}:k\geq1\}$. In this
way, we can interpret the Hopf algebra $\Q\otimes\mathrm Q^*$ as the
universal enveloping algebra of $\mathrm V^{2+}$. A perhaps more
interesting point involves the action of this even part of the
Virasoro algebra on the generators of $\MSp[1/2]_*(\Sp/\U_+)$
generating the augmentation ideal.

\section{Some Witt vector-like Hopf algebras}\label{sec:4}

The constructions in this section are reminiscent of, and influenced
by, the algebra of Witt vectors associated to a formal group law as
described in~\cite{Haz}. However, they differ in ways that appear
novel. Throughout the section, let $\k=\k_*$ be any graded commutative
unital ring.

The algebra $H_*=\k[b_n:n\geq1]$ with $b_n\in H_{2n}$
and $b_0=1$ admits the cocommutative coproduct
\[
\psi(b_n)=\sum_{0\leq k\leq n}b_k\otimes b_{n-k}
\]
and antipode $\chi$ for which
\[
\sum_{0\leq k\leq n}\chi(b_k)b_{n-k}=0.
\]
In terms of generating functions we have for the series
$b(T)=\dsum_{0\leq n}b_nT^n$,
\begin{align*}
\psi b(T)=\sum_{0\leq k\leq n}\psi(b_n)T^n&=b(T)\otimes b(T),
\\
\intertext{and}
\chi b(T)=\sum_{0\leq k\leq n}\chi(b_n)T^n&=b(T)^{-1}.
\end{align*}
This Hopf algebra represents the group scheme of sequences
$\mathbf{W}$ on graded $\k$-algebras for which
\[
\mathbf{W}(R)=\Alg_{\k}(H_*,R),
\]
where $\phi\in\Alg_{\k}(H_*,R)$ is identified with the sequence
$\{\phi(b_n)\}_{0\leq n}$; `addition' is given on sequences by
\[
\{c_n\}*\{d_n\}=\left\{\sum_{0\leq k\leq n}c_kd_{n-k}\right\}.
\]
Writing $c=\{c_n\}$ and $d=\{d_n\}$, we will also write
\[
c*d=\{(c*d)_n\}=\{c_n\}*\{d_n\}.
\]

Now suppose that $1/2\in\k$. There is an endomorphism of $\mathbf W$
determined by
\[
w\longmapsto(1/2)w
\]
which is induced by the Hopf algebra homomorphism satisfying
\[
b(T)\longmapsto b(T)^{1/2},
\]
which can be calculated using the formal binomial expansion. There
is also an involution $\tau$ given by
\[
\tau\.b(T)=b^\tau(T)=b(-T).
\]
There are two idempotent endomorphisms on $\mathbf W$ induced by
the Hopf algebra endomorphisms
\begin{align*}
b(T)&\longmapsto b^+(T)=b(T)^{1/2}b(-T)^{1/2}, \\
b(T)&\longmapsto b^-(T)=b(T)^{1/2}b(-T)^{-1/2}.
\end{align*}
A straightforward calculation shows that
\begin{align*}
b^+_{2n}&\equiv b_{2n}\mod{(\text{decomposables})}, \\
b^-_{2n-1}&\equiv b_{2n-1}\mod{(\text{decomposables})},
\end{align*}
hence these elements form as set of polynomial generators
for $H_*$.
\begin{remark}\label{rem:b+}
Although $b^+_{2n-1}=0$, the elements $b^-_{2n}\in H^-_*$
need not be zero since $b^-(T)$ is not an odd series.
\end{remark}

We will denote the images of these by $\mathbf W^+$ and $\mathbf W^- $,
where $\mathbf W^+(R)=\mathbf W(R)^\tau$. Denoting the corresponding
representing Hopf algebras by $H^+_*$ and $H^-_*$, we have
\[
H^+_*=\k[b^+_{2n}:n\geq1], \quad H^-_*=\k[b^-_{2n-1}:n\geq1].
\]
The following result is now immediate.
\begin{thm}
\label{thm:4.1}
There is a decomposition of group schemes
\[
\mathbf W=\mathbf W^+\times\mathbf W^-,
\]
or equivalently of Hopf algebras
\[
H_*=H^+_*\oTimes{\k}H^-_*.
\]
\end{thm}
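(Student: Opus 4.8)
The plan is to prove the two asserted decompositions --- of group schemes $\mathbf W=\mathbf W^+\times\mathbf W^-$ and of Hopf algebras $H_*=H^+_*\otimes_\k H^-_*$ --- simultaneously, by producing an explicit isomorphism from the two idempotent endomorphisms already constructed. Since $\mathbf W(R)=\Alg_\k(H_*,R)$ is a representable functor, the statement about group schemes is precisely dual to the statement about the representing Hopf algebra, so it suffices to establish one of them; I would work on the Hopf-algebra side, where everything is concrete.

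First I would verify that the two endomorphisms $b(T)\mapsto b^+(T)=b(T)^{1/2}b(-T)^{1/2}$ and $b(T)\mapsto b^-(T)=b(T)^{1/2}b(-T)^{-1/2}$ are genuinely \emph{idempotent} Hopf algebra endomorphisms. Compatibility with the coproduct is immediate from $\psi b(T)=b(T)\otimes b(T)$ together with the fact that the square-root operation $b(T)\mapsto b(T)^{1/2}$ is itself a Hopf algebra homomorphism (this is exactly the endomorphism $w\mapsto(1/2)w$ noted just above), and $\tau$ is an involution; so both $b^\pm$ are compositions of Hopf algebra maps. For idempotence one substitutes $b^+(T)$ (respectively $b^-(T)$) back into the defining formula and checks $(b^+)^+=b^+$ and $(b^-)^-=b^-$, which reduces to the elementary identities $b^+(-T)=b^+(T)$ (so $b^+$ is even, giving $b^+_{2n-1}=0$ as Remark~\ref{rem:b+} records) and $b^-(-T)=b^-(T)^{-1}$, making the square-root manipulations collapse correctly. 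I would also record that these two idempotents are \emph{orthogonal and complementary} in the appropriate sense: the product of the two image series recovers $b(T)$, since
\[
b^+(T)\.b^-(T)=\bigl(b(T)^{1/2}b(-T)^{1/2}\bigr)\bigl(b(T)^{1/2}b(-T)^{-1/2}\bigr)=b(T).
\]
This single identity is the computational heart of the whole theorem.

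With the idempotents in hand, the decomposition follows formally. The images are the sub-Hopf-algebras $H^+_*=\k[b^+_{2n}:n\geq1]$ and $H^-_*=\k[b^-_{2n-1}:n\geq1]$, and the congruences $b^+_{2n}\equiv b_{2n}$, $b^-_{2n-1}\equiv b_{2n-1}$ modulo decomposables (stated just before the theorem) guarantee, by the usual triangularity argument, that the combined family $\{b^+_{2n},b^-_{2n-1}:n\geq1\}$ is a set of polynomial generators for $H_*$. I would then define the multiplication map $H^+_*\otimes_\k H^-_*\to H_*$ and argue it is an isomorphism of algebras: surjectivity because the union of the generating sets generates, and injectivity (equivalently, algebraic independence of the combined family) again from the triangularity over the original polynomial generators $\{b_n\}$. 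That this algebra isomorphism respects the coproducts is where the identity $b^+(T)b^-(T)=b(T)$ is used again: it says precisely that the natural multiplication map $\mathbf W^+\times\mathbf W^-\to\mathbf W$ is the inverse of the map $w\mapsto(w^+,w^-)$ induced by the two idempotents, i.e. that the two factorizations of a sequence through its even and odd parts are mutually inverse.

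The main obstacle I anticipate is not conceptual but bookkeeping: making the formal square-root and its interaction with $\tau$ fully rigorous. One must confirm that $h(X)\mapsto h(X)^{1/2}$ (for $h\equiv 1$) is well defined and multiplicative over a $\Z[1/2]$-algebra --- this is exactly the point flagged earlier in the paper where the $\Z[1/2]$-hypothesis is essential --- and that it commutes with $\tau$ in the precise way needed for the idempotence computation. Once the relation $b^+(T)b^-(T)=b(T)$ together with the parities $b^+(-T)=b^+(T)$ and $b^-(-T)b^-(T)=1$ are nailed down, the splitting, the identification of the fixed subscheme $\mathbf W^+(R)=\mathbf W(R)^\tau$, and the Hopf-algebra tensor decomposition all drop out as formal consequences, so I expect the proof to be short once these generating-function identities are in place.
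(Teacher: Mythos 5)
Your proposal is correct and follows exactly the route the paper intends: the paper declares the theorem ``immediate'' from the two idempotent Hopf algebra endomorphisms $b\mapsto b^{\pm}$ and the congruences $b^+_{2n}\equiv b_{2n}$, $b^-_{2n-1}\equiv b_{2n-1}$ modulo decomposables, and you have simply filled in those details (idempotence via the parity identities $b^+(-T)=b^+(T)$, $b^-(-T)=b^-(T)^{-1}$, the complementarity $b^+(T)b^-(T)=b(T)$, and the triangularity argument for the polynomial generators). No gaps.
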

Let $c=\{c_n\}_{0\leq n}$ be a sequence in the $\k$-algebra $R$
with $c_0=1$. The power series $c(X)X=\dsum_{0\leq n}c_nX^{n+1}$
has a composition inverse $\tilde c(X)X=\dsum_{0\leq n}\tilde c_nX^{n+1}$;
the associated sequence $\tilde c=\{\tilde c_n\}_{0\leq n}$ is called
the \emph{reverted series} or the \emph{reversion} of $c$.
When $R$ has no $\Z$-torsion, we may determine $\tilde c_n$ by using
Lagrange Inversion to obtain the well known formula
\[
\tilde c_n=
\frac{1}{n+1}
\left[
c(T)^{-n-1}
\right]_{T^n},
\]
i.e., the coefficient of $T^n$ in $\dfrac{1}{n+1}c(T)^{-n-1}$.
Notice that $\tilde{\tilde c}=c$. Following Adams~\cite{JFASHGH}
and working in the algebra $H_*$, we denote the reversion of
$b=\{b_n\}$ by $m=\{m_n\}=\{\tilde b_n\}$. Since
\[
m_n\equiv -b_n\mod{(\text{decomposables})},
\]
we have $H_*=\k[m_n:n\geq1]$. We can interpret this as defining
yet another group scheme $\tilde{\mathbf W}$, for which
\[
\tilde{\mathbf W}(R)=\Alg_{\k}(H_*,R)
\]
is identified with the set of all sequences in $R$, but this time
$\phi\in\Alg_{\k}(H_*,R)$ corresponds to $\{\phi(m_n)\}$ rather
than $\{\phi(b_n)\}$. The addition law here is given by
\begin{align*}
c\diamond d&=\tilde{\tilde c*\tilde d}
\\
\intertext{or equivalently,}
\{c_n\}\diamond \{d_n\}&=\left\{\tilde{(\tilde c*\tilde d)}_n\right\}.
\end{align*}
The coproduct in $H_*$ gives rise to this group structure. Of course,
there is an isomorphism of group schemes $\mathbf W\iso\tilde{\mathbf W}$
for which on $R$,
\[
\{c_n\}\longleftrightarrow\{\tilde c_n\}.
\]
This is induced by the algebra automorphism of $H_*$ given by
\[
b_n\longleftrightarrow m_n.
\]

Now let $F$ be a formal group law over the $\k$-algebra $R$.
For simplicity, we assume that $F$ is \emph{odd}, however it
is possible to rework our discussion using an arbitrary formal
group law and replacing certain occurrences of $-T$ by the
$[-1]_F$-series.

Given a sequence $\{c_n\}_{0\leq n}$ in $R$, we can consider
the series $\dsum_{0\leq n}c_nX^{n+1}$. A standard calculation
in formal group theory shows that there are unique elements
$c^F_n$ in $R$ for which
\[
\sum_{0\leq n}c_nX^{n+1}=\sum^F_{0\leq n}c^F_nX^{n+1}.
\]
Moreover, each $c^F_n-c_n$ is a polynomial over $\k$ in the
coefficients of $F$ together with the $c^F_i$ and $c_i$ for
$i<n$. Given two such sequences $c=\{c_n\}$ and $d=\{d_n\}$,
we have
\[
(c\diamond d)(X)=
\sum^F_{0\leq n}(c^F\Dia{F}d^F)_nX^{n+1}
\]
where $(c\Dia{F}d)^F_n-c^F_n-d^F_n$ is a polynomial over $\k$ in
the $c^F_i$ and $d^F_i$ for $i<n$ together with the coefficients
of $F$. This gives a group scheme $\mathbf W^F$ (depending on $F$)
for which the underlying set of $\mathbf W^F(R)$ again consists
of sequences in $R$, but this time we have the addition law
\[
\{c_n\}\Dia{F}\{d_n\}=\{(c\Dia{F}d)_n\}.
\]
As a representing object for $\mathbf W^F$ we have
\[
H^F_*=\k[m^F_n:n\geq1],
\]
where we view the generators $m^F_n$ as the coefficients of
the logarithm of the formal group law $F$. Thus
\[
\mathbf W^F(R)\iso\Alg_{\k}(H^F_*,R),
\]
where we identify $\phi\in\Alg_{\k}(H^F_*,R)$ with the sequence
$\{\phi(m^F_n)\}$. $H^F_*$ admits a coproduct $\psi_F$ and antipode
$\chi^F$ giving rise to the above group structure.
\begin{thm}\label{thm:4.2}
There is an isomorphism of group schemes
$\tilde{\mathbf W}\iso\mathbf W^F$, or equivalently of
Hopf algebras $H_*\iso H^F_*$.
\end{thm}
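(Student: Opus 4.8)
The plan is to produce an explicit isomorphism of Hopf algebras $H_*\iso H_*^F$ and then dualize it to an isomorphism of the group schemes $\tilde{\mathbf W}\iso\mathbf W^F$. Both algebras are the same underlying polynomial ring $\k[m_n:n\geq1]=\k[m^F_n:n\geq1]$, so the content is entirely in matching the coproducts. First I would define the map on generators by $m_n\longmapsto m_n^F$, which is an algebra isomorphism because $m_n^F\equiv m_n$ modulo decomposables (each $m_n^F-m_n$ is, by the standard formal-group calculation quoted just above the statement, a polynomial in the $m_i,m_i^F$ with $i<n$ and the coefficients of $F$). An equivalent and more transparent route is to work on $R$-points: an element of $\tilde{\mathbf W}(R)$ is a sequence $c=\{c_n\}$, encoding the series $\sum_{0\le n}c_nX^{n+1}$, and the map sends $c$ to the sequence $c^F=\{c_n^F\}$ determined by the identity
\[
\sum_{0\le n}c_nX^{n+1}=\sum^F_{0\le n}c_n^FX^{n+1}.
\]
Since $c\mapsto c^F$ is invertible (the formal-group sum of a sequence can be reconstructed from its ordinary sum and conversely), this is a natural bijection of the underlying sets of $\tilde{\mathbf W}(R)$ and $\mathbf W^F(R)$.

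Next I would verify that this bijection is a homomorphism of group structures, i.e.\ that it carries the $\diamond$ addition on $\tilde{\mathbf W}$ to the $\Dia{F}$ addition on $\mathbf W^F$. This is essentially the defining compatibility already recorded in the excerpt: the displayed identity
\[
(c\diamond d)(X)=\sum^F_{0\le n}(c^F\Dia{F}d^F)_nX^{n+1}
\]
says exactly that under the passage from ordinary sums to $F$-sums, the operation $\diamond$ on sequences becomes the operation $\Dia{F}$. Thus the set-bijection $c\mapsto c^F$ intertwines the two group laws, so it is an isomorphism of group-valued functors, natural in $R$ by construction. By Yoneda, naturality of this isomorphism of representable functors is equivalent to the asserted Hopf algebra isomorphism $H_*\iso H_*^F$, with the coproduct $\psi_F$ and antipode $\chi^F$ corresponding to $\psi$ and $\chi$ transported along $m_n\llra m_n^F$.

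The main obstacle, and the one step genuinely worth checking rather than merely citing, is the compatibility of additions: one must be sure that the bijection respects the group laws on the nose, not just modulo decomposables. Concretely this amounts to confirming that $(c\diamond d)^F=c^F\Dia{F}d^F$ as an identity of sequences, which I would deduce directly from the definitions by comparing the two ways of writing the relevant series. Recall $\diamond$ is defined via reversion by $c\diamond d=\widetilde{\tilde c\ast\tilde d}$, while $\ast$ is the ordinary additive (Witt) law coming from multiplication of the series $b(T)$; the point is that passing a sequence through the $F$-summation and passing it through reversion are related precisely so that $\ast$ becomes ordinary series multiplication and $\diamond$ becomes formal-group addition of the reverted (logarithmic) data. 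Because $F$ is assumed odd, no $[-1]_F$-corrections intrude and the bookkeeping stays clean; once this identity of group laws is in hand, naturality and the Yoneda translation to Hopf algebras are formal. I would therefore spend the bulk of the proof on this single verification and treat the algebra-isomorphism and dualization steps as routine.
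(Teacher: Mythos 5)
Your proposal is correct and follows essentially the same route as the paper's (very short) proof: identify the sequence $\{c_n\}$ with $\{c^F_n\}$ via the identity $\sum_n c_nX^{n+1}=\sum^F_n c^F_nX^{n+1}$ and observe that the two group laws correspond under this bijection. The only remark worth making is that the compatibility $(c\diamond d)^F=c^F\Dia{F}d^F$, which you single out as the main point requiring verification, holds by the very definition of $\Dia{F}$ in Section~\ref{sec:4}, so the paper (rightly) treats that step as immediate.
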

\begin{proof}
We identify a series $\{c_n\}\in\tilde{\mathbf W}(R)$ with a power
series
\[
\sum_{1\leq n}c_nX^n=\sum^F_{1\leq n}(c^F_nX^n)
\]
and hence with an element of $\{c^F_n\}$ of $\mathbf W^F(R)$.
Moreover, the group structures match up by definition of $(c\Dia{F}d)$
\end{proof}

We may also use the decomposition $\mathbf W\iso\mathbf W^+\times\mathbf W^-$
to induce a similar decomposition of $\mathbf W^F$.
\begin{thm}\label{thm:4.3}
There is an decomposition of group schemes
\[
\mathbf W^F=\mathbf W^{F+}\times\mathbf W^{F-},
\]
or equivalently of Hopf algebras
\[
H^F_*=H^{F+}_*\oTimes{\k}H^{F-}_*.
\]
\end{thm}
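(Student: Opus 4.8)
The statement to prove is Theorem~\ref{thm:4.3}: that the group scheme $\mathbf W^F$ (for an odd formal group law $F$) decomposes as $\mathbf W^{F+}\times\mathbf W^{F-}$, equivalently that the Hopf algebra $H^F_*$ factors as $H^{F+}_*\oTimes{\k}H^{F-}_*$. The plan is to transport the decomposition $\mathbf W=\mathbf W^+\times\mathbf W^-$ of Theorem~\ref{thm:4.1} across the isomorphism of group schemes $\tilde{\mathbf W}\iso\mathbf W^F$ of Theorem~\ref{thm:4.2} (equivalently the Hopf algebra isomorphism $H_*\iso H^F_*$), composed with the isomorphism $\mathbf W\iso\tilde{\mathbf W}$ already constructed via $b_n\llra m_n$. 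Since both named isomorphisms are isomorphisms of group schemes, not merely of algebras, they automatically carry any product decomposition of the source to a product decomposition of the target.

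First I would define $\mathbf W^{F+}$ and $\mathbf W^{F-}$ to be precisely the images of $\mathbf W^+$ and $\mathbf W^-$ under the composite isomorphism $\mathbf W^+\times\mathbf W^-=\mathbf W\iso\tilde{\mathbf W}\iso\mathbf W^F$. With this definition the decomposition $\mathbf W^F=\mathbf W^{F+}\times\mathbf W^{F-}$ is then a formal consequence: an isomorphism of group schemes sends a direct product to a direct product, because it carries the two projection maps and the two inclusion maps to maps witnessing the analogous splitting on the target. Dually, the Hopf algebra statement $H^F_*=H^{F+}_*\oTimes{\k}H^{F-}_*$ follows by applying the representable-functor dictionary $\mathbf W^F(R)\iso\Alg_{\k}(H^F_*,R)$ and noting that a product of representable group schemes is represented by the tensor product of the representing Hopf algebras over $\k$.

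The only genuine content beyond this abstract transport is identifying the idempotent endomorphisms on $\mathbf W^F$ explicitly, and verifying they are well-defined Hopf algebra maps. The endomorphisms $b(T)\mapsto b^\pm(T)$ defining $\mathbf W^\pm$ were built using the square-root operation, which crucially requires $1/2\in\k$; under the reversion correspondence $\{c_n\}\llra\{\tilde c_n\}$ and then under the formal-group identification $\sum c_nX^{n+1}=\sum^F c^F_nX^{n+1}$, these translate into idempotents on $\mathbf W^F$ whose concrete formulas on the $m^F_n$ will involve $F$. I would record that $H^{F+}_*=\k[m^{F+}_{2n}:n\geq1]$ and $H^{F-}_*=\k[m^{F-}_{2n-1}:n\geq1]$, with the generators obtained by applying the two idempotents to the $m^F_n$, paralleling the computation $b^+_{2n}\equiv b_{2n}$ and $b^-_{2n-1}\equiv b_{2n-1}$ modulo decomposables that made those classes polynomial generators in the untwisted case.

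The step I expect to be the main obstacle is checking that the square-root idempotents genuinely descend through the nonlinear reversion and formal-group changes of variable to yield \emph{Hopf algebra} endomorphisms of $H^F_*$ (not merely algebra endomorphisms), so that the resulting splitting is one of group schemes. The oddness hypothesis on $F$ should be exactly what keeps $b(-T)$ and the $[-1]_F$-series aligned, so that the involution $\tau$ and the two idempotents remain compatible with the coproduct after transport; I would verify this compatibility at the level of generating functions, where the coproduct is the clean relation $\psi b(T)=b(T)\otimes b(T)$ and its $\mathbf W^F$-analogue, rather than term by term.
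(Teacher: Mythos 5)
Your proposal is correct and is essentially the paper's own argument: the paper gives no separate proof, simply noting that the decomposition of Theorem~\ref{thm:4.1} is transported through the isomorphisms $\mathbf W\iso\tilde{\mathbf W}\iso\mathbf W^F$ of Theorem~\ref{thm:4.2} to define $\mathbf W^{F\pm}$ and their representing Hopf algebras. The ``main obstacle'' you flag is in fact vacuous once $\mathbf W^{F\pm}$ are \emph{defined} as images under a group-scheme isomorphism, as you yourself observe, so no further verification is needed.
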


It is worth remarking that the Hopf algebras $H^{F+}_*$ and $H^{F-}_*$
are polynomial,
\[
H^{F+}_*=\k[m^{F+}_{2n}:n\geq1],
\quad
H^{F-}_*=\k[m^{F-}_{2n-1}:n\geq1],
\]
but that an element of $c\in\mathbf W^{F+}(R)$ has the form $\{c_n\}$
in which not all of the $c_{2n-1}$ need be~$0$. Similarly, for
$c\in\mathbf W^{F-}(R)$, the terms $c_{2n}$ need not vanish.

\section{Applications to some Hopf algebras from algebraic topology}
\label{sec:5}

If $\k=E_*$ is the coefficient ring of a complex oriented
cohomology theory, then $E_*(\BU_+)\iso E_*[b_n:n\geq1]$,
where we may identify $b_n$ with the standard generator
$\beta^E_n$ of~\cite{JFASHGH}.

A particular case of interest is the universal one, $E=\MU$. This
is made more interesting by the existence of a map of ring spectra
\[
\Delta\:\MU\lra\MU\Smash\BU_+=\MU\Smash\Sigma^\infty(\BU_+),
\]
which is the `external Thom diagonal' (see the discussion after
Theorem~\ref{thm:2.5}). This gives rise to a multiplicative
cohomology operation
\[
\bar\Delta\:\MU^*(\ )\lra(\MU\Smash\BU_+)^*(\ )
\iso\MU_*(\BU_+)\oTimes{\MU_*}\MU^*(\ )
\]
whose effect on the orientation class $x\in\MU^2(\CPi)$ is given
by
\[
\bar\Delta(x)=b(x)x=\sum_{n\geq0}b_nx^{n+1}.
\]

By Quillen's result~\cite{Qu,JFASHGH} identifying $\MU_*$ with
Lazard's ring, we can view the $\MU_*$-algebra
\[
\MU_*(\BU_+)=\MU_*[b^\MU_n:n\geq1]=\MU_*[m^\MU_n:n\geq1]
\]
(where $m^F_n=\tilde{b^F}_n$) as giving rise to the scheme (on the
category of commutative rings) which evaluated at $R$ gives all
pairs $(F,f(X))$ consisting of a formal group law $F$ over $R$
together with a power series
\[
f(X)=X+\sum_{1\leq n}c_nX^{n+1}.
\]
Interpreting this as $\Alg_\Z(\MU_*(\BU_+),R)$, we identify
the element $\phi$ of the latter with the sequence
$\{\tilde{\phi(b^\MU_n)}\}$.

We may apply the ideas of Section~\ref{sec:4} to describe the
$\evMU_*$-Hopf algebra $\evMU_*(\BU_+)$ equipped with the canonical
odd formal group law $\evFGL$. By the discussion following
Theorem~\ref{thm:2.3} together with the Atiyah-Hirzebruch spectral
sequence, in this case we have
\begin{align*}
\evMU_*(\BU_+)^+&=\evMU_*(\BSp_+),
\\
\evMU_*(\BU_+)^-&=\evMU_*(\Sp/\U_+)
\end{align*}
as Hopf algebras over the $\Z[1/2]$-algebra $\evMU_*$. This gives
the following interpretation of the scheme (on $\Z[1/2]$-algebras)
represented by $\evMU_*(\BU_+)^-$: points in $R$ are pairs $(F,\phi)$
consisting of an odd formal group law $F$ over $R$ together with a
strict isomorphism $\phi\:F_\phi\lra F$, where
\[
\phi(X)=u(X)X
\]
for some series $u\in\mathbf W^{F-}(R)$. Given two such pairs
$(F,\phi),(F,\theta)$, having the same formal group law $F$, their
`composition' is given by the operation of `addition' defined by
$\diamond$. So if $\theta(X)=v(X)X$, we have another such pair
\[
(F,\phi)\Dia{F}(F,\theta)=(F,(u\Dia{F}v)(X)X).
\]
Thus we see that $\evMU_*(\Sp/\U)$ represents a groupoid scheme,
with objects over a $\Z[1/2]$-algebra $R$ being odd formal group
laws over $R$, and morphisms being such strict isomorphisms.

\section{An interpretation in terms of homogeneous spaces}
\label{sec:6}

In this section we sketch an interpretation of some of our preceding
constructions in terms of the geometry of homogeneous spaces. For a
graded $\Z[1/2]$-algebra $R=R_*$ we will write $G(R)$ for the group
of invertible power series with coefficients in $R$, i.e.,
\[
\G(R)=\left\{g(T)=
\sum_{0\leq k} g_{k}T^{k+1}:g_{k}\in R_{2k}, g_{0}=1\right\}
\]
with composition as its operation. We also set
\[
\L(R)=\Alg_{\Z}(\MU_*,R),
\]
which by \cite{Qu} can be identified with the set of (graded) formal
group laws defined over $R$. There is a right action
\begin{align*}
\L(R)\times\G(R)&\lra\L(R);
\\
(F,g)&\longmapsto F^{g}
\end{align*}
defined by changing coordinates via
\[
F^{g}(X,Y)=g^{-1}(F(g(X),g(Y)).
\]
The subgroup
\[
\Godd(R)=\left\{g(T)\in \G(R):g(T)=\sum_{0\leq k}g_{2k}T^{2k+1}\right\}
\]
of odd power series, and the subset $\Lodd(R)$ of odd formal group laws,
are defined analogously; thus $\FGL(R)$ is the category associated to
the transformation group $(\G,\L)$, while $\evFGL(R)$ is defined by the
action of $\Godd$ on $\Lodd$. Finally, we define $\Gev(R)$ to be the
\emph{set} of all even power series over $R$ with zero constant term.

It is now easy to see that the action of $\G$ on the subobject
$\Lodd$ of $\L$ factors through a map
\[
\Lodd\Times{\Godd}G\lra\L
\]
which is an equivalence over $\Z[1/2]$. To see this, first recall
that we have seen that under these circumstances any formal group
law is isomorphic to some odd formal group law, implying that the
map is surjective. On the other hand, if two pairs $(F_1,g_1)$
and $(F_2,g_2)$ (with $F_1$ and $F_2$ both odd) map to the same group
law $F_1^{g_1}={F_2}^{g_2}$, then $h=g_2\o g_1^{-1}$ is an isomorphism
from the odd group law $F_1$ with the odd group law $F_2$, and therefore
commutes with the automorphisms
\[
[-1]_{F_1}(T)=[-1]_{F_2}(T)=-T,
\]
implying that $h$ is odd, and that the equivalence classes $[F_1,g_1]$
and $[F_2,g_2]$ are equal. Hence this map is also injective.

This description of $\L$ yields a diagram of functors
\[
\begin{CD}
\ds\Lodd@>>> \L = \Lodd\Times{\Godd}\G \\
@.                            @VVV   \\
\ds@.     \G\Times{\Godd}\text{pt}=\G/\Godd
\end{CD}
\label{eqn:6.1}
\]
for $\L$ as a $\Godd$-equivariant fibre bundle; but the map $\Lodd\lra\L$
of Section~\ref{sec:2} induced from the ring homomorphism $\MU_*\lra\evMU_*$
is a retraction of the total space to the fibre, which can be interpreted
as a product splitting
\[
\L\lra\Lodd\times\G/\Godd.
\]

If we now define the set of \emph{involutive power series over
$R$} by
\[
\Invol(R)=
\left\{e(T)=
\sum_{i\geq0}e_{i}T^{i+1}:e_{0}
=-1,\;e_{i}\in R_{2i},\;e(e(T))=T\right\},
\]
then a similar construction to the above defines the
$\G$-equivariant map
\begin{align*}
\G/\Godd &\lra\Invol;
\\
g&\longmapsto g^{-1}(-g(T))
\end{align*}
of representable functors. The inversion formula derived in
Section~\ref{sec:1} for the equation
\[
T + e(T)=\sum_{i\geq 1} c_{2i-1}(Te(T))^{i}
\]
expresses the coefficients $c_{2i-1}$ as functions on the space
of involutions; conversely, Hensel's Lemma, Theorem~\ref{thm:HL},
applied to the equation
\[
H(E)=\sum_{i\geq1}c_{2i-1}(TE)^i - T - E  =  0
\]
over the ring $R[[T]]$ and the ideal $I=(T)$, yields an expression
for the involution $e(T)$ as a formal power series in terms of the
coefficients $c_{2i-1}$. Hence these coefficients generate the ring
of homogeneous functions on the scheme of all such formal involutions.

Now recall the group schemes $\mathbf W^-$ and $\mathbf W^{F-}$ of
Section \ref{sec:4}.
\begin{thm}
\label{thm:6.2}
There are equivalences of schemes
\[
\mathbf W^-\xrightarrow[\Phi]{\iso}\Gev
\xleftarrow[\Phi^F]{\iso}\mathbf W^{F-},
\]
where $F$ is an odd formal group law.
\end{thm}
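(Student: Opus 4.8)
The plan is to exhibit both $\Phi$ and $\Phi^F$ as explicit morphisms of representable functors and then check that each induces an isomorphism on representing algebras. First I would unwind the definitions so that all three schemes live inside power series. By the discussion preceding Theorem~\ref{thm:4.1}, the idempotent $b(T)\mapsto b(T)^{1/2}b(-T)^{-1/2}$ that cuts out $\mathbf W^-$ has as fixed points exactly the sequences $c=\{c_n\}$ with $c_0=1$ whose generating series satisfies $c(T)c(-T)=1$; expanding this relation shows the odd coefficients $c_1,c_3,\dots$ are free while the even ones are determined, in agreement with $H^-_*=\k[b^-_{2n-1}:n\geq1]$. The scheme $\Gev$ consists of even power series $\sum_{i\geq1}a_iT^{2i}$ with vanishing constant term, so its coordinate ring is the polynomial algebra on the coefficient functions $a_i$.

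Next I would define $\Phi$ by additive symmetrisation. Writing $g(T)=c(T)\.T=\sum_{0\leq n}c_nT^{n+1}$, set
\[
\Phi(c)(T)=g(T)+g(-T)=T\(c(T)-c(-T)\).
\]
Since $c(T)-c(-T)=2\sum_{k\geq1}c_{2k-1}T^{2k-1}$, this is an even series with no constant term, hence lies in $\Gev$, and the coefficient of $T^{2k}$ is $2c_{2k-1}$. Thus $\Phi^*$ sends $a_k$ to $2b^-_{2k-1}$, and because $1/2\in\k$ this is an isomorphism of polynomial algebras, so $\Phi$ is an equivalence of schemes.

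For $\Phi^F$ I would repeat this with ordinary addition replaced by the group law. Given $u\in\mathbf W^{F-}(R)$, form the strict isomorphism $\phi(X)=u(X)\.X$ and set
\[
\Phi^F(u)(T)=F\(\phi(T),\phi(-T)\).
\]
Commutativity of $F$ makes $\Phi^F(u)$ invariant under $T\mapsto-T$ and it visibly starts in degree $2$, so it lies in $\Gev$; the hypothesis that $F$ is odd (so that $[-1]_F(T)=-T$ and $\mathbf W^{F-}$ with its generators $m^{F-}_{2n-1}$ is defined as in Theorems~\ref{thm:4.2} and~\ref{thm:4.3}) is what makes the $F$-symmetrisation compatible with the splitting of the minus part. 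A direct expansion of $F\(\phi(T),\phi(-T)\)$ gives that the coefficient of $T^{2k}$ equals $2m^{F-}_{2k-1}$ modulo decomposables, so the induced change of variables is triangular with the unit $2$ on the diagonal and $\Phi^{F*}$ is again an isomorphism over $\Z[1/2]$.

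The main obstacle I anticipate is the bookkeeping for $\Phi^F$: one must verify that only the odd generators $m^{F-}_{2n-1}$ occur at top filtration, with the even generators $m^{F-}_{2n}$ (which need not vanish, cf. Remark~\ref{rem:b+}) and the coefficients of $F$ contributing only decomposable corrections, and that the resulting triangular substitution is invertible integrally once $2$ is inverted. Once this leading-term computation is in place, naturality in $R$ of all three constructions is automatic, and the composite $\Phi^{-1}\o\Phi^F$ recovers the canonical isomorphism $\mathbf W^{F-}\iso\mathbf W^-$ furnished by Theorems~\ref{thm:4.2} and~\ref{thm:4.3}, confirming the coherence of the roof in the statement.
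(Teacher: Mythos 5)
Your proposal is correct, and for the first arrow it is essentially the paper's argument: the paper defines $\Phi$ by sending $c$ to $\sum_{k\geq1}c_{2k-1}X^{2k}$ and notes this is a bijection because the odd-indexed entries determine a point of $\mathbf W^-$ (i.e.\ $H^-_*=\k[b^-_{2n-1}:n\geq1]$); your additive symmetrisation $g(T)+g(-T)$ is the same map scaled by $2$, harmless since $1/2\in\k$, and your observation that the fixed-point condition $c(T)c(-T)=1$ determines the even entries in terms of the odd ones is a useful elaboration of why the odd entries are free coordinates.

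For the second arrow you take a genuinely different route. The paper does no new computation at all: it defines $\Phi^F$ as the composite of $\Phi$ with the canonical identifications $\mathbf W^{F-}\iso\mathbf W^-$ obtained by restricting the chain $\mathbf W^F\iso\tilde{\mathbf W}\iso\mathbf W$ of Theorem~\ref{thm:4.2} to the minus summands of Theorem~\ref{thm:4.3}. You instead write down an explicit $F$-symmetrisation $F\(\phi(T),\phi(-T)\)$ and prove it is an isomorphism by a leading-term argument on representing algebras: the coefficient of $T^{2k}$ is $2u_{2k-1}$ plus a polynomial in the lower $u_j$ and the coefficients of $F$, so the induced map on coordinate rings is triangular with unit diagonal over $\Z[1/2]$. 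This is sound (the corrections from the nonlinear part of $F$ and from the even entries, which as you note need not vanish, really do involve only lower generators), and it buys an explicit closed formula at the price of the bookkeeping you flag; the paper's route is shorter and reuses the Witt-vector machinery of Section~\ref{sec:4}. One small caveat: your closing assertion that $\Phi^{-1}\o\Phi^F$ \emph{recovers} the canonical isomorphism of Theorems~\ref{thm:4.2} and~\ref{thm:4.3} is neither needed nor justified --- your $\Phi^F$ differs from the paper's by an automorphism of $\Gev$, which is irrelevant to the existence statement of the theorem but means the two constructions need not agree on the nose (and $\Phi^F$ is reused later, e.g.\ in Theorem~\ref{thm:6.3}, so the choice is not entirely cosmetic).
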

\begin{proof}
The first equivalence is given by assigning to the series
$c=\{c_n\}\in\mathbf W^-(R)$ the even series
\[
\sum_{k\geq1}c_{2k-1}X^{2k},
\]
this gives an bijection $\mathbf W^-(R)\iso\Gev(R)$ since the
$c_{2k-1}$ determine a unique element $\{c_n\}$ of $\mathbf W^-(R)$.
The second equivalence follows from the equivalences
\[
\mathbf W\iso\tilde{\mathbf W}\iso\mathbf W^F
\]
of Theorem~\ref{thm:4.2}, together with that of Theorem~\ref{thm:4.3}.
\end{proof}

We next show that the decomposition of $\L$ into a product can
be given the structure of a family of abelian group objects
parameterized by $\Lodd$. Suppose that $F$ is an odd formal group
law defined over a $\Z[1/2]$-algebra $R$, and that
$u\in\mathbf W^{F-}(R)$; then there is a corresponding element
$\Phi^F(u)\in\Gev(R)$ and we also have the series
$u(T)={\dsum}^F_{n\geq0}c_nT^{n+1}$. Define a new formal group
law $F_u$ over $R$ by requiring the series
\[
\phi_u(T)=T\Plus{F}u(T)
\]
to be a strict isomorphism $\phi_u\:F_u\lra F$.
\begin{thm}
\label{thm:6.3}{\ }

\begin{enumerate}
\item[a)]
The function
\[
\Gev(R)\lra\Invol(R);
\quad
u(T)\longmapsto[-1]_{F_{u}}(T),
\]
is a bijection.
\item[b)]
The map
\[
\G(R)/\Godd(R) \lra\Invol(R);
\quad
g\longmapsto g^{-1}(-g(T)),
\]
is an isomorphism. Equivalently, $\G(R)$ acts transitively
on the set of all involutions $\Invol(R)$.
\end{enumerate}
\end{thm}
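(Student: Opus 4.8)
The plan is to prove the two parts in sequence, using the equivalences $\mathbf W^{-}\iso\Gev$ from Theorem~\ref{thm:6.2} and the inversion calculus of Section~\ref{sec:1} to control the passage between involutions and even power series. For part (a), I would begin by unwinding the definitions to express $[-1]_{F_u}(T)$ explicitly. Since $\phi_u\:F_u\lra F$ is a strict isomorphism with $\phi_u(T)=T\Plus{F}u(T)$, the $[-1]$-series transports as
\[
[-1]_{F_u}(T)=\phi_u^{-1}\(\,[-1]_F\(\phi_u(T)\)\)
=\phi_u^{-1}\(-\phi_u(T)\),
\]
using that $F$ is odd so $[-1]_F(X)=-X$. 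This exhibits $[-1]_{F_u}$ as an involution directly (it is conjugate to $T\mapsto-T$ by $\phi_u$), and shows the map $u\mapsto[-1]_{F_u}$ genuinely lands in $\Invol(R)$. The leading coefficient is $-1$ because $\phi_u(T)\equiv T\mod{(T^2)}$, so the target set is correct.

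For injectivity and surjectivity in part (a), the key is to connect $[-1]_{F_u}$ back to the coefficients $c_{2i-1}$ that Section~\ref{sec:1} attached to involutions. Writing $S(T)=T+[-1]_{F_u}(T)$ and $P(T)=-T[-1]_{F_u}(T)$, the inversion formula from Section~\ref{sec:1} gives the expansion $S(T)=\sum_{i\geq1}c^{F_u}_{2i-1}P(T)^i$, and Hensel's Lemma (Theorem~\ref{thm:HL}) applied to $H(E)=\sum_{i\geq1}c_{2i-1}(TE)^i-T-E$ over $R[[T]]$ with the ideal $(T)$ recovers $[-1]_{F_u}$ uniquely from the $c_{2i-1}$, exactly as described at the end of Section~\ref{sec:6}. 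I would show that the even series $\Phi^F(u)(T)=\sum_{k\geq1}c_{2k-1}T^{2k}\in\Gev(R)$ recording these coefficients agrees with the image of $u$ under the composite $\mathbf W^{F-}\iso\mathbf W^-\xrightarrow{\Phi}\Gev$ of Theorem~\ref{thm:6.2}; granting this, the bijection $\Gev(R)\iso\Invol(R)$ follows because the inversion formula and its Hensel-Lemma inverse are mutually inverse, each of $u\mapsto\{c_{2i-1}\}$ and $\{c_{2i-1}\}\mapsto[-1]_{F_u}$ being bijective.

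Part (b) is then largely formal once the splitting $\L\iso\Lodd\times\G/\Godd$ and the equivariant map $\G/\Godd\lra\Invol$, $g\mapsto g^{-1}(-g(T))$, from the body of Section~\ref{sec:6} are in hand. I would argue that $g\mapsto g^{-1}(-g(T))$ is well defined on cosets (replacing $g$ by $h\o g$ with $h\in\Godd$ odd leaves $g^{-1}(-g(T))$ unchanged, since odd $h$ commutes with $T\mapsto-T$) and that it is surjective because every involution arises as $[-1]_{F_u}=\phi_u^{-1}(-\phi_u(T))$ for a suitable $u$ by part~(a), so we may take $g=\phi_u$. Injectivity on cosets follows from the same commutation argument run in reverse: if $g_1^{-1}(-g_1(T))=g_2^{-1}(-g_2(T))$ then $g_2\o g_1^{-1}$ commutes with $T\mapsto-T$, hence is odd, so $g_1$ and $g_2$ lie in the same $\Godd$-coset. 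The transitivity reformulation is the statement that this bijection intertwines the $\G$-action on cosets with the $\G$-action on $\Invol$ by conjugation $e\mapsto g^{-1}\o e\o g$, which is immediate from the formula.

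The main obstacle I anticipate is the compatibility claim in part~(a): verifying that the coefficients $c_{2i-1}$ produced by the Section~\ref{sec:1} inversion formula applied to $F_u$ really are the same data as the element $\Phi^F(u)\in\Gev(R)$ coming from the Witt-vector equivalences of Theorem~\ref{thm:6.2}. This requires tracking the series $u(T)={\dsum}^F_{n\geq0}c_nT^{n+1}$ through the reversion and the $\mathbf W\iso\mathbf W^F$ identification of Theorem~\ref{thm:4.2} and matching it, degree by degree, against the $P^{F_u}(T)$-expansion of $S^{F_u}(T)$; the bookkeeping between the $\diamond$-addition and formal-group summation is where the real work lies, whereas the involution and transitivity arguments in part~(b) are short conjugation computations.
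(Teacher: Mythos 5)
Your part (b) is essentially the paper's argument: injectivity via the observation that the stabilizer of $T\mapsto -T$ under $g\mapsto g^{-1}\o[-1]\o g$ is exactly $\Godd(R)$, and surjectivity by taking $F$ odd (e.g.\ additive), invoking part (a) to write a given involution as $[-1]_{F_u}=\phi_u^{-1}(-\phi_u(T))$, and setting $g=\phi_u$. That half is fine.

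Part (a) has a genuine gap, and it sits exactly where you say it does. Everything you actually establish --- that $[-1]_{F_u}=\phi_u^{-1}(-\phi_u(T))$ is an involution with leading coefficient $-1$ --- only shows the map is well defined; the bijectivity is entirely deferred to the ``compatibility claim'' that the Section~1 coefficients $c_{2i-1}$ of $[-1]_{F_u}$ agree with $\Phi^F(u)$, and that claim is both unproven and false as literally stated. Already for the additive group law and $u(T)=u_2T^2+\cdots$ one computes $[-1]_{F_u}(T)=-T-2u_2T^2-\cdots$, whence $c_1=-2u_2$ rather than $u_2$; so at best the two coordinate systems are related by a nontrivial triangular change of variables, and proving \emph{that} is the entire content of part (a). The paper bypasses the Lagrange/Hensel coordinates on $\Invol(R)$ altogether: it writes $[-1]_{F_u}(T)=(-T)\Plus{F}w(T)$, applies $\phi_u$ to the identity $\phi_u([-1]_{F_u}(T))=-\phi_u(T)$ and uses oddness of $F$ to get the functional equation
\[
w(T)=-\bigl(u(T)\Plus{F}u((-T)\Plus{F}w(T))\bigr),
\]
from which comparison of coefficients gives $w_{2k}=-2u_{2k}+P_{2k}$ and $w_{2k+1}=P_{2k+1}$ with $P_n$ a polynomial in lower-index $u$'s, $w$'s and the coefficients of $F$. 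Inverting this triangular system over $\Z[1/2]$ gives existence and uniqueness of $u$ for each involution in one stroke. If you want to keep your route, you must replace ``agrees with $\Phi^F(u)$'' by an explicit triangularity statement of this kind and prove it --- at which point you have reproduced the paper's computation with the extra overhead of the reversion and $\mathbf W\iso\mathbf W^F$ identifications.
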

\begin{proof}{\ }

\noindent
a) We may write
\[
[-1]_{F_{u}}(T)=(-T)\Plus{F}w(T)
\]
where $w(T)=\dsum_{k\geq2}w_kT^k$ for some $w_k\in R_{2k-2}$.
On applying $\phi_u$ and remembering that $F$ is odd, we obtain
\begin{align*}
(-T)\Plus{F}(-u(T))&=[-1]_F(\phi_u(T)) \\
&=(-T)\Plus{F}w(T)\Plus{F}u((-T)\Plus{F}w(T)),
\end{align*}
since $F$ is odd, and this yields
\[
w(T)=-\(u(T)\Plus{F}u((-T)\Plus{F}w(T))\).
\]
Comparing coefficients of $T^2$ and $T^3$ gives $w_2=-2u_2$ and
expresses $w_3$ as a polynomial over $\Z[1/2]$ in $w_2$ and $u_2$
together with the coefficients of the formal group law $F$. By
induction and comparison of the coefficients of $T^{2k}$ and
$T^{2k+1}$, we find that
\begin{align*}
w_{2k}&=-2u_{2k}+P_{2k}, \\
w_{2k+1}&=P_{2k+1},
\end{align*}
where $P_{n}$ is a polynomial over $\Z[1/2]$ in the $u_{2i}$ ($2i<n$)
and $w_j$ ($j<n$) together with the coefficients of the formal group
law $F$. Now these equations may be inverted to express the $u_{2k}$
in terms of the $w_\ell$,
\[
u_{2k}=(-1/2)\(w_{2k}-P_{2k}\).
\]
Hence, given the series $w(T)$ there is exactly one even power
series $u(T)$ for which
\[
[-1]_{F_u}(T)=(-T)\Plus{F}w(T).
\]

\noindent
b)
Notice that
\[
g^{-1}(-g(T))=-T\iff g(-T)=-g(T)\iff\text{$g(T)$ is odd}.
\]
Hence, this map is injective.

To prove surjectivity, we use an idea from the proof of part~(a).
Let $e(T)\in\Invol(R)$. Taking $F$ to be any odd formal group
law over $R$ (e.g., $F=\hat{\mathrm{G}}_{\mathrm{a}}$, the additive
group law), we can find an even series $u(T)$ with no constant
term which satisfies
\[
[-1]_{F_u}(T)=e(T).
\]
Since
\[
[-1]_{F_u}(T)=\phi_u^{-1}(-\phi_u(T)),
\]
we see that for $g=\phi_u$,
\[
e(T)=g^{-1}(-g(T)).
\]

This completes the proof of Theorem~\ref{thm:6.3}.
\end{proof}

\section{Formal involutions and symmetric spaces}\label{sec:7}

We continue to assume that $R=R_*$ is a graded commutative
$\Z[1/2]$-algebra. Let $f(T)\in R[[T]]$ with
\[
f(T)=T+(\text{higher order terms});
\]
we call such series \emph{strictly invertible} over $R$. We
define the series
\[
e_{f}=f^{-1}\o[-1]\o f,
\quad\text{i.e., }\quad e_{f}(T)=f^{-1}(-f(T))
\]
where $[-1](T)=-T$. Notice that the $e_f\o e_f=\id$ in the
sense that $e_f(e_f(T))=T$.
\begin{thm}\label{thm:7.1}
Suppose $f$ and $g$ are strictly invertible over $R$, and that
the involutions $e_{f}$ and $e_{g}$ commute under composition.
Then $e_{f}=e_{g}$.
\end{thm}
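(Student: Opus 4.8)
The plan is to analyze the algebraic constraint imposed by the commutativity of two formal involutions and show it forces equality. Writing $e_f(T) = f^{-1}(-f(T))$ and $e_g(T) = g^{-1}(-g(T))$, I would first set $h = g \circ f^{-1}$, which is a strictly invertible series over $R$ (since both $f$ and $g$ are), and translate the commutativity hypothesis $e_f \circ e_g = e_g \circ e_f$ into a conjugation identity for $h$. A cleaner route is to observe that both $e_f$ and $e_g$ are conjugates of the standard involution $[-1](T) = -T$ by strictly invertible series, so everything can be transported into the ``$f$-coordinate'': set $k = g \circ f^{-1}$, so that $f \circ e_g \circ f^{-1} = k^{-1} \circ [-1] \circ k$ while $f \circ e_f \circ f^{-1} = [-1]$. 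The commutativity of $e_f$ and $e_g$ is then equivalent to the commutativity of $[-1]$ with $k^{-1} \circ [-1] \circ k$.

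\noindent
\textbf{Reducing to a fixed-point equation.} The core of the argument is the identity obtained by letting $[-1]$ commute with $k^{-1}\circ[-1]\circ k$. Expanding $[-1]\circ(k^{-1}\circ[-1]\circ k) = (k^{-1}\circ[-1]\circ k)\circ[-1]$ and simplifying using $[-1]\circ[-1] = \id$, I would derive a functional equation for $k$ alone. Concretely, writing $k^{\tau}(T) = -k(-T)$ for the series obtained by conjugating $k$ with $[-1]$, the commutativity should reduce to the statement that $k \circ k^{\tau}$ (or equivalently $k^{-1}\circ k^{\tau}$) is itself \emph{odd}, i.e., commutes with $[-1]$. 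The goal is then to show that an odd conjugating factor contributes nothing new, so that $e_g = e_f$; equivalently, that $k$ differs from an odd series only by a factor that fixes the involution. I expect the key algebraic fact to be that if $k^{-1}\circ k^{\tau}$ is odd, then $k^{-1}\circ[-1]\circ k = [-1]$, which is exactly $e_g = e_f$ after transporting back by $f$.

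\noindent
\textbf{The inductive comparison of coefficients.} Since $R$ is torsion-free-free in spirit only over $\Z[1/2]$ (we cannot assume torsion-freeness here, only that $2$ is invertible), I would carry out the comparison coefficient by coefficient. Writing $e_f(T) = -T + \sum_{i\geq 1} a_i T^{i+1}$ and $e_g(T) = -T + \sum_{i\geq 1} b_i T^{i+1}$, the involution conditions $e_f(e_f(T)) = T$ and $e_g(e_g(T)) = T$ already impose strong recursive constraints on the $a_i$ and $b_i$. The commutativity $e_f(e_g(T)) = e_g(e_f(T))$ gives a second family of relations. The plan is to prove $a_i = b_i$ by induction on $i$: assuming $a_j = b_j$ for all $j < i$, extract the coefficient of $T^{i+1}$ from the commutativity relation and show that, after using the involution identities and dividing by an integer power of $2$ (which is legitimate over a $\Z[1/2]$-algebra), the difference $a_i - b_i$ is forced to vanish.

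\noindent
\textbf{The main obstacle.} The delicate point will be verifying that the coefficient extracted at stage $i$ isolates $a_i - b_i$ with an invertible (power-of-$2$) coefficient, rather than entangling it with higher-order unknowns; this is precisely where the hypothesis $1/2 \in R$ must be used decisively, mirroring its role throughout Sections~\ref{sec:6} and~\ref{sec:7} (for instance in the inversion $u_{2k} = (-1/2)(w_{2k} - P_{2k})$ of Theorem~\ref{thm:6.3}). I would expect the cleanest presentation to avoid raw coefficient bookkeeping by instead arguing structurally: use part~(b) of Theorem~\ref{thm:6.3}, which identifies $\Invol(R)$ with $\G(R)/\Godd(R)$ and shows $\G(R)$ acts transitively on involutions, together with the observation that the stabilizer of the base involution $[-1]$ is exactly $\Godd(R)$. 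The hard part will then be recasting ``$e_f$ and $e_g$ commute'' as a statement about the relevant cosets in $\G(R)/\Godd(R)$ and showing that two commuting involutions in this homogeneous space must coincide, which reflects the rigidity of the underlying symmetric-space geometry alluded to in the introduction.
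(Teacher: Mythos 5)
Your opening reduction is the paper's: setting $k=g\o f^{-1}$ (the paper uses $h=f\o g^{-1}$), the hypothesis becomes the statement that $[-1]$ commutes with $k^{-1}\o[-1]\o k$, and the whole theorem amounts to showing this forces $k$ to be odd. The gap is that this is exactly the step you never supply: your ``key algebraic fact'' (if $k^{-1}\o k^{\tau}$ is odd then $k^{-1}\o[-1]\o k=[-1]$) is a restatement of the theorem, asserted rather than proved. The missing engine is the paper's Lemma~\ref{lem:7.2}: \emph{a formal involution of the form $e(T)=T+O(T^2)$ over a $\Z[1/2]$-algebra equals $T$}, proved by the one-line induction $e_n=-e_n+E_n(e_1,\dots,e_{n-1})$, so $2e_n$ is determined by lower terms and hence $e_n=0$. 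Granting this, the proof is three lines: since $k^{-1}\o[-1]\o k$ is itself an involution, commutativity with $[-1]$ gives $B\o B=\id$ for $B=[-1]\o k^{-1}\o[-1]\o k$; $B$ is strictly invertible, so $B=\id$ by the Lemma; hence $k$ is odd, $f\Godd(R)=g\Godd(R)$, and $e_f=e_g$. Without isolating this rigidity statement, your plan has no mechanism for concluding.

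Your fallback coefficient induction can be completed, but the obstacle you flag is real and you have not resolved it. Writing $e_f(T)=-T+\sum_{i\geq1}a_iT^{i+1}$ and $e_g(T)=-T+\sum_{i\geq1}b_iT^{i+1}$, the coefficient of $T^{i+1}$ in $e_f\o e_g-e_g\o e_f$ is $\(1+(-1)^{i+1}\)(a_i-b_i)$ plus terms that cancel once $a_j=b_j$ for $j<i$; the factor $1+(-1)^{i+1}$ is $2$ for $i$ odd but \emph{vanishes} for $i$ even, so at even stages commutativity says nothing about $a_i-b_i$. The rescue is the involution identity: the coefficient of $T^{i+1}$ in $e\o e=\id$ is $\(-1+(-1)^{i+1}\)a_i$ plus a universal polynomial in $a_1,\dots,a_{i-1}$, and this factor is $-2$ for $i$ even, so even-index coefficients of any such involution are determined by the lower ones and the induction closes by alternating between the two identities according to parity. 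This parity split is the whole point of the computation and is absent from your write-up. Finally, the suggested appeal to Theorem~\ref{thm:6.3}(b) is circular as stated: identifying $\Invol(R)$ with $\G(R)/\Godd(R)$ only reformulates the problem, and ``two commuting involutions in this homogeneous space coincide'' is the theorem itself, not a tool for proving it.
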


The proof will require the following Lemma. We will say that
a series $e(T)\in R[[T]]$ is a \emph{formal involution} if
$e\o e=\id$, i.e., $e(e(T))=T$.
\begin{lem}
\label{lem:7.2}
Suppose $e(T)$ is a formal involution which strictly invertible
over $R$. Then $e=\id$, i.e., $e(T)=T$.
\end{lem}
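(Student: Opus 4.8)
The plan is to compare Taylor coefficients in the identity $e\o e=\id$ order by order and to exploit the invertibility of $2$ in $R$. Write $e(T)=T+\sum_{n\geq2}e_nT^n$ with $e_n\in R$, and suppose for contradiction that $e\neq\id$. Let $k\geq2$ be minimal with $e_k\neq0$, so that $e(T)=T+e_kT^k+(\text{terms of degree}>k)$. The aim is to show that the degree-$k$ part of $e(e(T))$ cannot vanish, which will contradict $e\o e=\id$.

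First I would record the elementary observation that $e(T)^j=T^j+(\text{terms of degree}>j)$ for every $j\geq1$, since the lowest-degree term of $e(T)$ is $T$ and the next correction only enters in degree $k$. Substituting $e(T)$ into its own expansion then gives
\[
e(e(T))=e(T)+e_k\,e(T)^k+\sum_{j>k}e_j\,e(T)^j
=T+2e_kT^k+(\text{terms of degree}>k),
\]
because the outer linear term contributes $e_kT^k$, the term $e_k\,e(T)^k$ contributes a further $e_kT^k$, and every summand $e_j\,e(T)^j$ with $j>k$ is supported in degrees $>k$. Since $e\o e=\id$ forces the coefficient of $T^k$ on the left to vanish, we obtain $2e_k=0$, and because $1/2\in R$ this yields $e_k=0$, contradicting the minimality of $k$. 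Hence no such $k$ exists and $e(T)=T$, which proves the Lemma; equivalently, the same computation run inductively shows that $e_n=0$ for all $n\geq2$ in turn.

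The only point requiring any care is the coefficient bookkeeping, namely verifying that the degree-$k$ part of $e(e(T))$ receives contributions \emph{only} from the outer linear term and from $e_k\,e(T)^k$, with all remaining summands living in degree $>k$. This is immediate once the observation about $e(T)^j$ is in place, so the argument is really a one-line induction whose essential ingredient is the hypothesis $1/2\in R$; this is precisely where the standing assumption that $R$ is a $\Z[1/2]$-algebra is used, and it is what rules out the genuine involutions (such as $T\mapsto-T$ after a change of coordinates) that would otherwise violate the conclusion. I would then invoke this Lemma in the proof of Theorem~\ref{thm:7.1} by applying it to the strictly invertible involution built from $e_f$ and $e_g$.
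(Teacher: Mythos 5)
Your argument is correct, and it reaches the same essential identity as the paper --- namely that the coefficient $e_k$ satisfies $2e_k=(\text{something determined by lower coefficients})$, which dies because $1/2\in R$ --- but by a slightly different and more elementary route. The paper's proof applies Lagrange Inversion (Theorem~\ref{thm:LI}) to the compositional inverse: writing $e(T)=T+\sum_{n\geq1}e_nT^{n+1}$, the reversion formula gives the $n$-th coefficient of $e^{-1}$ as $-e_n+E_n$ with $E_n$ a polynomial in $e_1,\dots,e_{n-1}$, and then $e^{-1}=e$ yields $e_n=-e_n+E_n$ and an induction finishes. You instead expand $e\circ e$ directly and isolate the first potentially nonzero coefficient, observing that in degree $k$ it receives exactly two contributions of $e_kT^k$ (one from the outer linear term, one from $e_k\,e(T)^k$), so $e\circ e=\id$ forces $2e_k=0$; your bookkeeping that $e(T)^j=T^j+(\text{degree}>j)$ and that all terms $e_j\,e(T)^j$ with $j>k$ live in degree $>k$ is accurate. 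What your version buys is self-containedness: it needs no inversion formula, and in particular sidesteps any worry about invoking Section~\ref{sec:1} (where $R$ was assumed torsion-free) for the general graded $\Z[1/2]$-algebras of Section~\ref{sec:7} --- though that worry is cosmetic, since the reversion identity is a universal polynomial identity. What the paper's version buys is consistency with the machinery already set up, since Lagrange Inversion is used repeatedly elsewhere. Your closing remark, that the minimal-counterexample phrasing can be replaced by a straightforward induction showing $e_n=0$ for each $n$ in turn, brings the two proofs essentially into line.
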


\begin{proof}
By Lagrange Inversion of Theorem \ref{thm:LI}, the coefficients
of the series $e(T)=T+\dsum_{n\geq1}e_nT^{n+1}$ satisfy
\[
e_n=-e_n+E_n
\]
for some polynomial $E_n$ over $\Z[1/2]$ in the coefficients
$e_1,\ldots,e_{n-1}$. By induction on~$n$, and remembering 
that $1/2\in R$, we obtain $e_n=0$ for $n\geq1$, where the 
initial case follows from the equation $e_1=-e_1$.
\end{proof}

Now suppose that $e_{f}\o e_{g}=e_{g}\o e_{f}$ for two strictly
invertible series $f$ and $g$. Expanding this equation and
suppressing the composition symbols gives
\[
f^{-1}[-1]fg^{-1}g=g^{-1}[-1]gf^{-1}[-1]g.
\]
On setting $h=fg^{-1}$, this becomes
\[
(h[-1]h^{-1}[-1])(h[-1]h^{-1}[-1])=\id,
\]
which implies that $h[-1]h^{-1}[-1]$ is a formal involution.
Since $h[-1]h^{-1}[-1](T)$ is clearly strictly invertible,
Lemma~\ref{lem:7.2} now gives $h[-1]h^{-1}[-1]=\id$ and so
$h[-1]=[-1]h$, implying that $h=fg^{-1}$ is odd. Thus $f=h\o g$
and $g$ lie in the same left coset of $\Godd(R)$ in $\G(R)$,
i.e., $f\Godd(R)=g\Godd(R)$ in $\G(R)/\Godd(R)$. Hence
$e_{f}=e_{g}$, which proves Theorem~\ref{thm:7.1}. Our next
result shows that $G(R)/\Godd(R)$ is a kind of symmetric 
space.
\begin{cor}\label{cor:7.3}
At every point $e\in \G(R)/\Godd(R)$ there is an involution
\[
\G(R)/\Godd(R)\lra\G(R)/\Godd(R);
\quad
c\longmapsto e^{-1}\o c\o e,
\]
which has $e$ as the unique fixed point.
\end{cor}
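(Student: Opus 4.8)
The plan is to identify the involution $c\mapsto e^{-1}\o c\o e$ with a genuine symmetric-space symmetry and then verify the fixed-point claim using Theorem~\ref{thm:6.3}(b), which has already established that $\G(R)$ acts transitively on $\Invol(R)$ via the map $g\mapsto g^{-1}(-g(T))$, with kernel $\Godd(R)$. First I would make precise what the formula means: a point of $\G(R)/\Godd(R)$ is a coset $c\Godd(R)$, identified via this isomorphism with the involution $e_c(T)=c^{-1}(-c(T))\in\Invol(R)$. The candidate symmetry sends the coset of $c$ to the coset of $e^{-1}\o c\o e$, where $e$ itself is a strictly invertible series representing the point in question (using Corollary~\ref{cor:7.3}'s notation loosely, I would first fix a representative $f\in\G(R)$ with $e=e_f$). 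The key point is that this assignment is well-defined on cosets: if $c$ and $c'$ differ by right multiplication by an odd series, so do $e^{-1}\o c\o e$ and $e^{-1}\o c'\o e$, since conjugation by $e$ preserves $\Godd(R)$ — this last fact follows because $e\o[-1]=[-1]\o e$ is exactly the condition that $e$ is odd, and odd series are closed under conjugation of odd series.

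Next I would verify that the map is an involution of $\G(R)/\Godd(R)$, i.e. applying it twice returns $c\Godd(R)$. This amounts to the identity $e^{-1}\o(e^{-1}\o c\o e)\o e=e^{-2}\o c\o e^2$ agreeing with $c$ modulo $\Godd(R)$; since $e\o e=\id$ as a formal involution, we have $e^2=\id$ and the double application is literally the identity on representatives, so it is certainly an involution on cosets. The heart of the corollary is the uniqueness of the fixed point. A coset $c\Godd(R)$ is fixed precisely when $e^{-1}\o c\o e$ and $c$ lie in the same left coset of $\Godd(R)$, i.e. $c^{-1}\o e^{-1}\o c\o e\in\Godd(R)$, which says that $c^{-1}\o e^{-1}\o c\o e$ is odd, equivalently that it commutes with $[-1]$. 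I would translate this into a statement about the associated involutions and invoke Theorem~\ref{thm:7.1}: the fixed-point condition forces the involution $e_c$ to commute with $e$ under composition, and Theorem~\ref{thm:7.1} then yields $e_c=e$, i.e. $c$ and $f$ lie in the same coset.

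The main obstacle will be the bookkeeping in the uniqueness step: translating the algebraic fixed-point condition $c^{-1}\o e^{-1}\o c\o e\in\Godd(R)$ into the hypothesis of Theorem~\ref{thm:7.1} requires care, because Theorem~\ref{thm:7.1} is phrased in terms of two involutions $e_f,e_g$ commuting, whereas the fixed-point equation is phrased directly in terms of conjugation. The cleanest route is probably to observe that the symmetry, transported across the bijection $\G(R)/\Godd(R)\iso\Invol(R)$ of Theorem~\ref{thm:6.3}(b), becomes the familiar symmetric-space involution on the space of involutions, namely $e_c\mapsto e\o e_c\o e$ (or its inverse); this is a standard reflection formula, and its unique fixed point among involutions commuting appropriately is $e$ itself, exactly by Theorem~\ref{thm:7.1} applied with $e_g=e$. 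I would therefore spend most of the work confirming that the conjugation formula on $\G(R)/\Godd(R)$ corresponds under this identification to the reflection $e_c\mapsto e\o e_c\o e$ on $\Invol(R)$, after which both the involutivity and the uniqueness of the fixed point follow immediately from results already in hand.
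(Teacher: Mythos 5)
Your closing route---transport the map across the bijection $\G(R)/\Godd(R)\iso\Invol(R)$ of Theorem~\ref{thm:6.3}(b), read it as $c\mapsto e\o c\o e$ on involutions (using $e^{-1}=e$), note that the fixed-point equation $e\o c\o e=c$ says exactly that $c$ and $e$ commute, and conclude $c=e$ from Theorem~\ref{thm:7.1} together with the surjectivity of $g\mapsto e_g$---is correct and is precisely the argument the paper intends; the corollary is stated there without proof as an immediate consequence of Theorem~\ref{thm:7.1}. However, the coset-level argument you sketch first is not just awkward bookkeeping, it is wrong, and you should delete it rather than try to repair it. The series $e$ is an involution beginning with $-T$, so it is not odd in the sense of $\Godd(R)$, and it satisfies $e\o[-1]=[-1]\o e$ only in the degenerate case $e=[-1]$ (apply Theorem~\ref{thm:7.1} to the commuting involutions $e=e_f$ and $[-1]=e_{\id}$). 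As a result, conjugation by $e$ does \emph{not} preserve $\Godd(R)$: for odd $h$, a short computation shows $e\o h\o e$ is odd precisely when $h$ commutes with the involution $e\o[-1]\o e$, which is a genuinely extra condition whenever $e\neq[-1]$. So the assignment $c\,\Godd(R)\mapsto(e^{-1}\o c\o e)\,\Godd(R)$ on cosets of strictly invertible series is not well defined, and the only coherent reading of the corollary is the one through $\Invol(R)$, where $c$ and $e$ are honest involutions and no choice of representative is being made. With that reading fixed, your final paragraph is a complete proof and agrees with the paper's.
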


There is thus some similarity of the quotient $\G/\Godd$ with the
Riemannian symmetric spaces defined by the quotient of the group
of complex points of a simple Lie group by a maximal compact subgroup.
This fits in with the observation that the Lie algebra of $\G$ is
twice as big as the Lie algebra of $\Godd$, so that one can try to
think of $\Godd$ as the `real' points of some `Lie' group, with $\G$
as the `complex' points.

\section{A Hopf-Galois interpretation}\label{sec:HopfGal}

Although this viewpoint plays no r\^ole in our work, we 
feel it worth mentioning that Rognes's Hopf-Galois theory 
for commutative $S$-algebras~\cite[chapter~12]{JR:Opusmagnus} 
provides another framework for the equivalence
\[
\MSp[1/2]\Smash\Sp/\U_+\simeq\MU[1/2].
\]
For this we need to work with commutative $S$-algebras in 
the sense of~\cite{EKMM}. Actually we need to pass to 
Bousfield local $S$-algebras with respect to $S[1/2]$, these 
are equivalent commutative $S[1/2]$-algebras.

First we note that $\MU$ is such a Hopf-Galois extension 
of the sphere $S$ with respect to the commutative $S$-Hopf 
algebra $S[BU]=\Sigma^\infty BU_+$. Here the Thom diagonal 
provides a multiplicative equivalence 
\[
\MU \lra \MU\wedge\Sigma^\infty BU_+ 
             \xrightarrow{\;\sim\;} \MU\wedge\MU.
\]
The canonical fibrations of infinite loop spaces $BSp\lra BU$
and $BU \lra BSp$ compose to give an equivalence after 
inverting~$2$ hence there is a splitting of infinite 
loop spaces 
\[
BU[1/2] \simeq BSp[1/2] \times Sp/U[1/2].
\]
Then we can view $\MSp[1/2]$ as the homotopy coinvariants 
of the coaction
\[
\MU[1/2] \lra \MU[1/2]\wedge S[BU] 
                          \lra \MU[1/2]\wedge S[Sp/U[1/2]].
\]
Furthermore, $\MSp$ is itself a Hopf-Galois extension of~$S$ 
for the commutative $S$-Hopf algebra $S[Bp]=\Sigma^\infty BSp_+$.

\end{document}